\documentclass[final]{elsarticle}
\usepackage[utf8]{inputenc}
\usepackage{url}
\usepackage{color,comment}
\usepackage{ulem}
\usepackage{graphicx}
\usepackage{amsmath,amssymb}
\usepackage{float}
\usepackage{graphicx}
\usepackage{appendix}
\usepackage{listings}
\usepackage{subfigure}
\usepackage{bm}
\usepackage{makecell}
\usepackage{multirow}
\usepackage[T1]{fontenc}
\usepackage[utf8]{inputenc}
\usepackage{algorithm,algorithmicx,algpseudocode}
\usepackage{multicol}
\usepackage[pdftex, unicode=true,
            colorlinks,
            linkcolor=red,
            anchorcolor=blue,
            citecolor=green
            ]{hyperref}

\newtheorem{theorem}{Theorem}[section]

\newtheorem{remark}{Remark}[section]

\newtheorem{assumption}{Assumption}[section]
\newenvironment{proof}{\paragraph{Proof}}{\hfill$\square$}
\makeatletter
\newenvironment{breakablealgorithm}
  {\begin{center}
     \refstepcounter{algorithm}
     \hrule height.8pt depth0pt \kern2pt
     \renewcommand{\caption}[2][\relax]{
       {\raggedright\textbf{\ALG@name~\thealgorithm} ##2\par}
       \ifx\relax##1\relax
         \addcontentsline{loa}{algorithm}{\protect\numberline{\thealgorithm}##2}
       \else
         \addcontentsline{loa}{algorithm}{\protect\numberline{\thealgorithm}##1}
       \fi
       \kern2pt\hrule\kern2pt
     }
  }{\kern2pt\hrule\relax
   \end{center}
  }
\makeatother

\newcommand{\be}{\bm e}

\newcommand{\bN}{\bm N}
\newcommand{\bff}{\bm f}
\newcommand{\bn}{\bm n}
\newcommand{\bw}{\bm w}

\newcommand{\bX}{\bm X}
\newcommand{\bx}{\bm x}

\newcommand{\bu}{\bm u}

\newcommand{\bv}{\bm v}

\newcommand{\bg}{\bm g}

\newcommand{\bS}{\bm \sigma}
\newcommand{\bD}{\bm D}

\begin{document}
\begin{frontmatter}

\title{Physics-informed neural networks for solving two-phase flow problems with moving interfaces}

\author[chuanda]{Qijia Zhai}
\ead{zhaiqijia@stu.scu.edu.cn}
\address[chuanda]{School of Mathematics, Sichuan University, Chengdu 610064, China}

\author[unlv]{Pengtao Sun\corref{cor}}
\ead{pengtao.sun@unlv.edu}
\address[unlv]{Department of Mathematical Sciences, University of Nevada Las
Vegas, Las Vegas, Nevada 89154, USA}

\author[chuanda,xie]{Xiaoping Xie}
\ead{xpxie@scu.edu.cn}
\address[xie]{National Key Laboratory of Fundamental Algorithms and Models for Engineering Simulation, Sichuan University, Chengdu 610207, China}

\author[dali]{Xingwen Zhu}
\ead{zxw4688@126.com}
\address[dali]{School of Mathematics and Computer Sciences, Dali University, 2 Hongsheng Street, Dali, Yunnan
671003, China}

\author[LSEC]{Chen-Song Zhang}
\ead{zhangcs@lsec.cc.ac.cn}
\address[LSEC]{LSEC \& NCMIS, Academy of Mathematics and System Science, Beijing, China.}

\cortext[cor]{Corresponding author}

\begin{abstract}
In this paper, a meshfree method using physics-informed neural
networks (PINNs) is developed for solving two-phase flow problems
with moving interfaces, where two immiscible fluids bearing
different material properties, are separated by a dynamically
evolving interface and interact with each other through interface
conditions.
Two kinds of distinct scenarios of interface motion are addressed:
the prescribed interface motion whose moving velocity is explicitly
given, and the solution-driven interface motion whose evolution is
determined by the velocity field of two-phase flow. Based upon
piecewise deep neural networks and spatiotemporal sampling
points/training set in each fluid subdomain, the proposed PINNs
framework reformulates the two-phase flow moving interface problem
as a least-squares (LS) minimization problem, which involves all
residuals of governing equations, interface conditions, boundary
conditions and initial conditions. Furthermore, approximation
properties of the proposed PINNs approach are analyzed rigorously
for the presented two-phase flow model by employing the Reynolds
transport theorem in evolving domains,
moreover, a comprehensive error estimation is provided to account
for additional complexities introduced by the moving interface and
the coupling between fluid dynamics
and interface evolution. 
Numerical experiments are carried out to illustrate the
effectiveness of the proposed PINNs approach for various
configurations of two-phase flow moving
interface problems, 
and to validate the theoretical findings as well.
A practical guidance is thus provided for an efficient training set
distribution when applying the proposed PINNs approach to two-phase
flow moving interface problems in practice.
\end{abstract}

\begin{keyword}
Physics-informed neural networks (PINNs) \sep two-phase flow moving
interface problems\sep Navier-Stokes equations \sep spatiotemporal
sampling points/training set\sep Reynolds transport theorem \sep
energy error estimates.

\end{keyword}

\end{frontmatter}

\section{Introduction}
Two-phase flow problems with moving interfaces are fundamental in
numerous scientific and engineering applications, ranging from
microfluidics and biomedical engineering to environmental fluid
dynamics and industrial processes (e.g., see
\cite{Jacqmin1999,cerne2001coupling,FengLiu2007,quan2007moving,mirjalili2017interface}
and references therein). These problems involve the interaction
between two immiscible fluids with different material properties
across a dynamically evolving interface, where the interface
position and shape change over time according to either prescribed
movement patterns or solution-driven evolution governed by fluid
dynamics. Usually, the mathematical modeling of such problems
requires solving the incompressible Navier-Stokes equations in each
fluid domain, coupled through interface conditions that ensure
continuity of velocity (kinematic condition) and balance of forces
(dynamic condition) across the moving interface. As is well known,
the presence of moving interfaces introduces not only significant
computational challenges but also particular theoretical barriers
when applying energy error estimates in evolving domains.

In specific, the complexity of two-phase flow moving interface
problems stems from the following several key factors. First, the
interface geometry evolves continuously in time, requiring
sophisticated tracking mechanisms and adaptive numerical strategies.
Second, the material properties exhibit sharp discontinuities across
the interface, leading to singularities in the solution and
requiring careful handling of jump conditions. Third, the coupling
between two fluid phases through interface conditions creates
additional nonlinearity and complexity in the governing equations.
Fourth, the time-dependent nature of fluid domains that adapts to
the moving interface introduces boundary integral terms over the
moving interface in energy error estimates, which must be properly
handled to ensure theoretical rigor and numerical
stability~\cite{quan2007moving,cerne2001coupling}. Fifth, for the
case of solution-driven interface motion, the interface evolution is
determined by the fluid velocity field through kinematic coupling
conditions, creating a complex bidirectional coupling between fluid
dynamics and interface evolution that significantly complicates both
theoretical analysis and numerical implementation.

Traditional numerical methods for solving two-phase flow problems
can be broadly categorized into mesh-based methods and meshfree
methods. Mesh-based methods include interface-fitted approaches such
as classical finite element
methods~\cite{babuvska1970finite,bramble1996finite,Chen.Z;Zou.J1988a},
discontinuous Galerkin
methods~\cite{massjung2012unfitted,cai2011discontinuous}, and
virtual element methods~\cite{chen2017interface}, as well as
interface-unfitted approaches such as immersed boundary
methods~\cite{peskin_2002}, immersed interface
methods~\cite{doi:10.1137/0731054,Li.Z;Lai.M2001a}, immersed finite
volume
methods~\cite{OEVERMANN20095184,10.1007/978-3-540-78827-0_78},
matched interface and boundary methods~\cite{ZHOU20061}, ghost fluid
methods~\cite{liu2000boundary}, extended finite element
methods~\cite{fries2010extended}, cut finite element
methods~\cite{burman2015cutfem}, and immersed finite element
methods~\cite{doi:10.1137/130912700}. While mesh-based methods have
achieved considerable success in solving interface problems, they
face significant challenges when dealing with moving interfaces,
where numerical implementations become particularly complex due to
the need for frequent mesh adaptation, interface tracking, and
handling of jump conditions across evolving boundaries. The
computational cost associated with mesh generation and adaptation
for complex moving interfaces can be prohibitive, especially for
three-dimensional problems with intricate interface geometries. For
the case of solution-driven interface motion, the additional
complexity of coupling interface evolution with fluid dynamics
further exacerbates these challenges.

Meshfree methods offer an attractive alternative by circumventing
the need for explicit mesh generation and providing natural handling
of complex geometries. These methods have been successfully applied
to various realistic problems including solid
mechanics~\cite{liu2016overview}, fracture
mechanics~\cite{daxini2014review}, and fluid-structure
interaction~\cite{mazhar2021meshfree,hu2019consistent,hu2019spatially}.
A comprehensive review of meshfree method and its applications can
be found in~\cite{garg2018meshfree}. Recently, deep neural networks
(DNNs) have emerged as a powerful tool for solving partial
differential equations (PDEs)
\cite{CaiChenLiuLiu2019,Dissanayake1994,EYu,HeLiXuZheng},
particularly for challenging problems that are difficult to handle
with traditional numerical
methods~\cite{HanJentzenE,SamaniegoGoswami,SirignanoSpilopoulos,TranHamiltonMckayQuiringVassilevski,WangZhang}.
Among these DNN approaches, physics-informed neural networks (PINNs)
have shown particular potential in solving interface problems due to
their flexibilities, where the piecewise DNNs are utilized to
approximate solutions in different subdomains while interface
conditions are treated together with the governing equations in the
least-squares (LS) minimization problem
\cite{HeLinHu2022,ZhuHuSun2023,SarmaRoy2024,LiFan2025}.

Recent advances in applying PINNs to two-phase flow problems have
demonstrated significant potential for addressing complex interface
dynamics. For instance, a data-driven simulation of two-phase fluid
processes with heat transfer is developed in
\cite{jalili2024physics} where PINNs are applied to capture
interface behavior and model hydrodynamics and heat transfer in flow
configurations. 
PINNs for the phase field modeling (PF-PINNs) is proposed for a
two-dimensional immiscible incompressible two-phase flow
\cite{qiu2022physics} in which the Cahn-Hilliard/phase field
equation and Navier-Stokes equations are directly encoded into
neural network residuals.
A residual-based adaptive PINN framework is developed for two-phase
flow in porous media \cite{hanna2022residual}, where an adaptive
refinement strategy is introduced by capturing moving flow fronts to
improve the accuracy. 

Despite these significant advances, the existing PINN approaches for
two-phase flow problems exhibit several fundamental limitations that
restrict their applicabilities to moving interface scenarios. In
particular, the work shown in \cite{jalili2024physics} relies
heavily on pre-computed numerical results of fluid field for the
training purpose, which limits its generalizability to problems
where such data is unavailable or computationally expensive to
generate. Moreover, their work does not address theoretical
challenges associated with moving boundaries in time-dependent
domains.
The PF-PINNs method proposed in \cite{qiu2022physics} is inherently
limited to diffuse interface models and cannot directly handle sharp
interface problems with discontinuous material properties across the
interface. The adaptive PINNs framework studied in
\cite{hanna2022residual} lacks rigorous error estimates for moving
interface problems and does not provide comprehensive error bounds
that account for the additional complexity introduced by
time-dependent domains. In summary, none of these approaches provide
a rigorous theoretical analysis framework for handling PINN-based
energy error estimates in time-dependent domains adapting to moving
interfaces, which is essential to establish a theoretical
convergence guarantee and ensure physical consistency for two-phase
flow moving interface problems. Furthermore, these existing methods
primarily focus on prescribed interface motion scenarios and do not
address the additional complexity of solution-driven interface
motion, where the interface evolution is driven by the fluid
dynamics from either side of the interface through kinematic- and
dynamic interface conditions.


In this paper, we develop a comprehensive framework for two-phase
flow problems with moving interfaces using PINNs, in the sense of
both numerical methodology and theoretical analysis, to address both
prescribed interface motion and solution-driven interface motion
scenarios. The proposed PINNs method reformulates the two-phase flow
moving interface problem as a LS minimization problem that involves
residuals of Navier-Stokes equations, interface conditions, boundary
conditions and initial conditions, and utilizes piecewise deep
neural networks to approximate fluid velocity and pressure fields in
each fluid subdomain, where spatiotemporal sampling points/training
sets are defined on the evolving interfaces and interior subdomains,
as well as the fixed boundaries and initial domains. Unlike previous
approaches, our method provides the following primary advantages:
(1) zero or very few preset training data are required, making the
proposed DNN approach truly physics-informed and generalizable to a
new two-phase flow problem with moving interface; (2) the moving
sharp interface problem with discontinuous material properties is
directly handled to avoid limitations of diffuse interface model;
(3) a rigorous theoretical analysis is provided by employing the
Reynolds transport theorem to conduct energy error estimates in
evolving domains, with comprehensive error bounds that account for
moving interface effects and interface coupling; (4) a practical
guidance is offered for the optimal distribution of a spatiotemporal
sampling points/training set based on theoretical insights rather
than empirical tuning; and (5) both prescribed and solution-driven
interface motion scenarios can be handled within a unified
theoretical framework.


Specifically, our theoretical analysis reveals that the
generalization errors of the developed PINNs approach for two-phase
flow moving interface problems involve quadrature errors arising
from interface conditions across the moving interface as well, in
addition to the interior PDEs, boundary conditions and initial
conditions.
This insight is particularly valuable for practitioners, as it
suggests that computational resources should be allocated
strategically, with increased sampling density on the interface
to achieve optimal accuracy. The error bounds also provide
theoretical guarantees for the convergence of the proposed method,
ensuring that the neural network approximation converges to the true
solution as the number of sampling points increases. For the case of
solution-driven interface motion, our analysis accounts for the
additional coupling terms that arise from the kinematic interface
condition, providing theoretical guarantees even in the presence of
the complex bidirectional coupling.

Our theoretical findings are validated by numerical experiments for
various interface configurations in this paper, including
deformable, translational, rotational and solution-driven interface
evolutions, as well as high-contrast interface coupling,
which illustrates that the proposed method can handle complex moving
interfaces in two-phase flow problems without the need of mesh
generation, making it particularly attractive in practical
applications with complex or rapidly changing interface geometries.
It is especially suitable for scenarios involving interface motion
related to the solution, where traditional mesh-based methods face
significant challenges.

The rest of the paper is organized as follows. We introduce the
model of two-phase flow problems with moving interfaces in
Section~\ref{sec:model},
and develop its PINNs methodology in
Section~\ref{sec:interface-problems}, including the LS formulation,
neural network architecture, and discrete implementation. The
comprehensive error analysis framework is presented in
Section~\ref{sec:error} to provide theoretical convergence
guarantees and practical guidance for optimal sampling point
distribution. Numerical experiments are carried out in
Section~\ref{sec:numerics} to show the effectiveness of the proposed
method through three challenging examples: deforming, translating
and rotating or solution-driven interface motions with complex
geometric transformations and high-contrast physical parameters.
Finally, Section~\ref{sec:conclusion} summarizes the key
contributions, discusses advantages and limitations, and outlines
future research directions.

Throughout this paper, we use standard notation for Sobolev spaces
$W^{l,p}(\Psi)$ ($0 \leq l < \infty$, $1 \leq p \leq \infty$) and
their associated norms $\| \cdot \|_{W^{l,p}(\Psi)}$. For $p = 2$,
we use the standard notation $W^{l,2}(\Psi) = H^l(\Psi)$. When $l =
0$, $H^0(\Psi)$ coincides with the standard $L^2$ space, i.e.,
$L^2(\Psi)$. Since we consider time-dependent problems, we also use
the standard notation for the time-dependent Sobolev spaces
$W^{m,q}(0,T;W^{l,p}(\Psi))$ ($0 \leq m < \infty$, $1 \leq q \leq
\infty$) and their associated norms $\| \cdot
\|_{W^{m,q}(W^{l,p}(\Psi))}$. When $p = q = 2$, we use the standard
notation $H^m(0,T; H^l(\Psi))$. Furthermore, if $m = 0$, we use the
notation $L^2(0,T; H^l(\Psi))$.

\section{Model description}\label{sec:model}
This section presents the mathematical model of two-phase flow
problems with moving interfaces, which serves as the foundation of
the proposed PINNs approach. 
Consider $\Omega \subset \mathbb{R}^d$ ($d = 2, 3$) with a convex
polygonal boundary $\partial\Omega$, which is partitioned into two
time-dependent subdomains $\Omega_1(t)$ and $\Omega_2(t)$ separated
by a moving interface $\Gamma(t)$ and satisfies:
\begin{align*}
\Omega = \Omega_1(t) \cup \Omega_2(t), \ \Omega_1(t) \cap
\Omega_2(t) = \emptyset, \ \Gamma(t) = \partial\Omega_1(t) \cap
\partial\Omega_2(t), \ \forall t \in [0,T],
\end{align*}
where $T > 0$ is the terminal time, the external boundaries
$\partial\Omega_i(t) \backslash \Gamma(t)$ remain fixed in time for
both subdomains $\Omega_i(t),\ i = 1, 2$, as illustrated in Figure
\ref{fig:domain}.
\begin{figure}[hbt]
    \begin{center}
        \includegraphics[height=3cm]{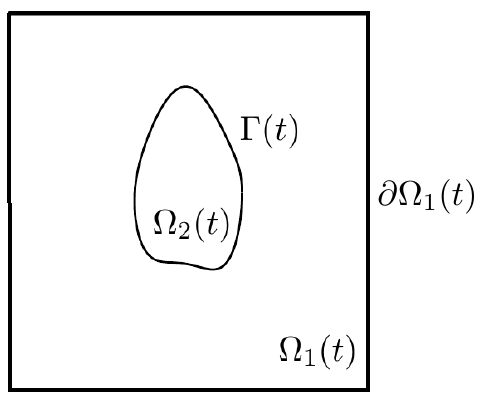}
        \hspace{1cm}
        \includegraphics[height=3cm]{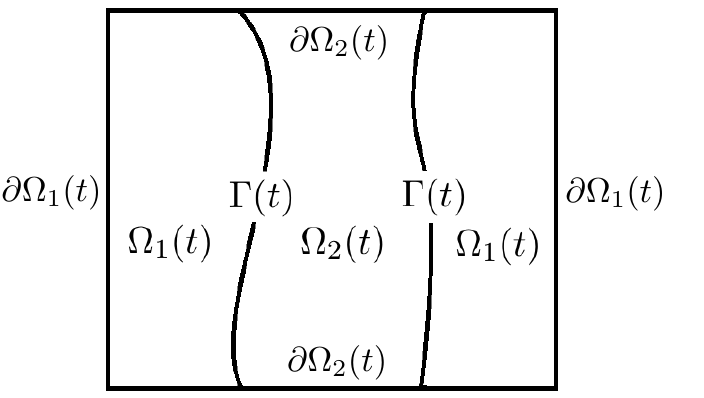}
    \end{center}
    \caption{Two schematic evolving domain decompositions divided by the
time-dependent interface $\Gamma(t)$: the immersed case
(\textbf{left}) and the
        back-to-back case (\textbf{right}).}\label{fig:domain}
\end{figure}

The evolution of the moving interface $\Gamma(t)$ can be classified
into the following two distinct cases based on how the interface
motion is determined.

\noindent\textbf{Case 1: The prescribed interface motion.} In this
case, the moving velocity of the interface, $\bw(\mathbf{x}, t)$, is
explicitly prescribed and known a priori. This includes the
following three common scenarios,
(1) \textit{translation}: $\bw(\mathbf{x}, t) = \mathbf{c}(t)$
    with a spatially independent translational velocity
    $\mathbf{c}(t)$;
(2) \textit{rotation}: $\bw(\mathbf{x}, t) =
    \boldsymbol{\omega}(t) \times (\mathbf{x} - \mathbf{x}_c)$
    with a spatially independent angular velocity $\boldsymbol{\omega}(t)$,
    where $\mathbf{x}_c$ is the center of rotation;
and (3) \textit{deformation}: $\bw(\mathbf{x}, t) =
\mathbf{f}(\mathbf{x}, t)$
    with a given deformation velocity function $\mathbf{f}$.

\noindent\textbf{Case 2: The solution-driven interface motion.} In
this case, the interface motion is determined by the velocity
solution of Navier-Stokes equations in each fluid subdomain through
a coupling mechanism across the moving interface, i.e., the velocity
of moving interface equals to the fluid velocity at the interface,
which must satisfy the kinematic condition:
\begin{equation}
\bw(\mathbf{x}, t) = \bv_1(\mathbf{x}, t) = \bv_2(\mathbf{x}, t),
\quad \mathbf{x} \in \Gamma(t),
\end{equation}
where $\bv_i(\mathbf{x}, t)$ represents the fluid velocity of phase
$i$. In practice, since $\bv_1(\mathbf{x}, t)$ and
$\bv_2(\mathbf{x}, t)$ are unknown variables, $\bw(\mathbf{x}, t)$
is then defined by the numerical solution of $\bv_1(\mathbf{x}, t)$
or $\bv_2(\mathbf{x}, t)$ across $\Gamma(t)$, i.e., $\bw(\mathbf{x},
t)$ is known a posteriori. The current interface position at time
$t$, $\Gamma(t)$, is thus determined by adding the computed
interface displacement, which is the temporal integration of the
velocity field, to the initial interface position $\Gamma(0)$,
defined as $\Gamma(t) = \Gamma(0) + \int_0^t \bw(\mathbf{x}(s), s)\, ds$.

Based on the above geometric configuration and incompressible
Navier-Stokes equations, then a generic two-phase flow moving
interface problem can be defined as follows for the fluid velocity
$\bm{v}_i$ and the fluid pressure $p_i$ $(i=1,2)$: find ${\bm{u}}_i
:=(\bm{v}_i, p_i)\in H^1(0,T;(H^2(\Omega_i(t)))^d)\times
L^\infty(0,T; H^1(\Omega_i(t)))$ such that
\begin{equation}\label{eqn:interface-model}
\left\{
    \begin{array}{rcll}
        \rho_i \left(\frac{\partial\bm{v}_i}{\partial t} + \bm{v}_i \cdot \nabla\bm{v}_i\right) - \nabla \cdot \bm{\sigma}_i &=& \bm{f}_i, &  \text{in} \ \Omega_i(t) \times (0,T], \\
        \nabla \cdot \bm{v}_i &=& 0, &  \text{in} \ \Omega_i(t) \times (0,T], \\
        \bm{v}_1 - \bm{v}_2 &=& \bm{g}_1,& \text{on} \ \Gamma(t) \times [0, T], \\
        \bm{\sigma}_1 \bm{n}_1 + \bm{\sigma}_2 \bm{n}_2 &=&
        \bm{g}_2, & \text{on} \ \Gamma(t) \times [0, T], \\
        \bm{v}_i &=& \bm{v}_i^b,  & \text{on}  \ \partial \Omega_i(t) \backslash \Gamma(t) \times [0,T],\\
        \bm{v}_i(\bm{x}_i, 0) &=& \bm{v}_i^0(\bm{x}_i), & \text{in} \ \Omega_i(0),
    \end{array}
\right.
\end{equation}
where $\bS_i$ denotes the stress tensor of each fluid phase $i$,
given by
\begin{equation}
\bm{\sigma}_i = -p_i \bm{I} + 2\mu_i \bm{D}(\bm{v}_i), \quad
\text{where } \bm{D}(\bm{v}_i) = \frac{1}{2}\left(\nabla\bm{v}_i +
(\nabla\bm{v}_i)^T\right), 
\end{equation}
with $\rho_i$ and $\mu_i$ denoting the density and dynamic viscosity
of fluid phase $i$, respectively, 
$\bm{f}_i$ represents the external body force acting on the fluid
phase $i$, $\bm{n}_1 = -\bm{n}_2$ is the unit outward normal vector
on the interface $\Gamma(t)$ pointing from $\Omega_1(t)$ to
$\Omega_2(t)$ (see Figure~\ref{fig:domain}).
(\ref{eqn:interface-model})$_3$ denotes the kinematic interface
condition that ensures continuity of velocity when $\bm{g}_1 =
\bm{0}$ or allowing for velocity jumps when $\bm{g}_1 \neq \bm{0}$,
and (\ref{eqn:interface-model})$_4$ denotes the dynamic interface
condition that ensures balance of forces when $\bm{g}_2 = \bm{0}$ or
allowing for force jumps when $\bm{g}_2 \neq \bm{0}$. For most
practical applications, we consider no-slip interface conditions
with $\bm{g}_1 = \bm{g}_2 = \bm{0}$. Moreover, $\bm{v}_i^b$ and
$\bm{v}_i^0$ are the Dirichlet boundary value and initial value of
fluid velocity of phase $i$ on the external boundaries and in the
initial domain, respectively. Other types of boundary conditions
(Neumann, Robin, or mixed) can be incorporated as needed for
specific applications.

\section{PINNs/meshfree method} \label{sec:interface-problems}
In this section, we develop a meshfree approach within the frame of
physics-informed neural networks (PINNs) for solving two-phase flow
moving interface problems, which will offer significant advantages
over traditional numerical methods by eliminating the need for
complex mesh generation and providing natural handling of interface
conditions through the neural network architecture. In what follows,
we will introduce the mathematical formulation of PINNs methodology
first for the presented two-phase flow model
(\ref{eqn:interface-model}), then develop its corresponding neural
network approximation to (\ref{eqn:interface-model}).

For the simplicity of notations, we introduce the following
differential operators to represent residuals of governing
equations, interface conditions, boundary conditions and initial
conditions shown in (\ref{eqn:interface-model}), respectively:
\begin{eqnarray}
&&    \mathcal{L}_i(\bm{u}_i) :=
    \begin{pmatrix}
        \rho_i \left(\frac{\partial\bv_i}{\partial
            t}+\bv_i\cdot\nabla\bv_i\right)-\nabla\cdot\bS_i - \bff_i \\
        \nabla \cdot \bv_i
    \end{pmatrix},
\qquad i = 1,2,\\
&&    \mathcal{L}_{\Gamma}(\bm{u}_1, \bm{u}_2) :=
    \begin{pmatrix}
        \bv_1 - \bv_2 - \bg_1 \\
        \bS_1\bn_1+\bS_2\bn_2 - \bg_2
    \end{pmatrix},\\
&&    \mathcal{B}_i(\bm{u}_i) := \bm{v}_i - \bm{v}_i^b, \quad i =
    1,2,\\
&&    \mathcal{I}_i(\bm{u}_i) := \bm{v}_i(\bm{x}_i, 0) -
\bm{v}_i^0(\bm{x}_i), \quad i = 1,2.
\end{eqnarray}
Then, (\ref{eqn:interface-model}) can be reformulated as follows:
\begin{equation}\label{eqn:interface-model1}
\left\{
    \begin{array}{rcll}
        \mathcal{L}_i(\bm{u}_i) &=& \bm{0}, &  \text{in} \ \Omega_i(t) \times (0,T], \ i = 1,2, \\
        \mathcal{L}_{\Gamma}(\bm{u}_1, \bm{u}_2) &=& \bm{0}, & \text{on} \ \Gamma(t) \times [0, T], \\
        \mathcal{B}_i(\bm{u}_i) &=& \bm{0},  & \text{on}  \ \partial \Omega_i(t) \backslash \Gamma(t) \times [0,T], \ i = 1,2, \\
        \mathcal{I}_i(\bm{u}_i) &=& \bm{0}, & \text{in} \ \Omega_i(0),\
        i=1,2.
    \end{array}
\right.
\end{equation}

\subsection{The least-squares formulation} \label{sec:DNN-method}
The core idea of PINNs methodology for the presented two-phase flow
problem with a moving interface is to reformulate the moving
interface problem as a least-squares minimization problem and then
use separate neural networks to approximate the solution of
Navier-Stokes equations in each evolving subdomain that adapts to a
moving interface. This approach extends the framework established
in~\cite{karniadakis2021physics,ZhuHuSun2023} to handle two-phase
flow problems with discontinuous coefficients and complex interface
conditions defined on a moving interface.

First of all, we formulate the following spatiotemporal
least-squares loss functional for (\ref{eqn:interface-model}) or
(\ref{eqn:interface-model1}) with respect to $\tilde{\bm{u}}_i :=
(\tilde{\bm{v}}_i, \tilde{p}_i) \in H^1(0,T;
\left(H^2(\Omega_i(t))\right)^d) \times L^\infty(0,T;
H^1(\Omega_i(t)))$ for $i=1,2$:
\begin{align}
\mathcal{R}(\tilde{\bm{u}}_1, \tilde{\bm{u}}_2) & := \sum_{i=1}^2
\omega_{\mathcal{I}_i} \int_{\Omega_i(0)} \rho_i \left|\bm{v}_i^0 -
\tilde{\bm{v}}_i \right|^2 \mathrm{d} \mathbf{x}_i + \int_{0}^T
\bigg\{ \sum_{i=1}^2 \omega_{\mathcal{L}_i} \int_{\Omega_i(t)}
\bigg[\bigg|\rho_i
\bigg( \frac{\partial \tilde{\bm{v}}_i}{\partial t} +\notag\\
&\qquad\tilde{\bm{v}}_i \cdot \nabla \tilde{\bm{v}}_i \bigg) -
\nabla \cdot {\bS}_i(\tilde{\bu}_i) - \bm{f}_i \bigg|^2  +
\big| \nabla \cdot \tilde{\bm{v}}_i \big|^2\bigg] \mathrm{d} \mathbf{x}_i +\notag\\
& \qquad \sum_{i=1}^2 \omega_{\mathcal{B}_i} \int_{\partial
\Omega_i(t) \backslash \Gamma(t)} \big| \bm{v}_i^b -
\tilde{\bm{v}}_i \big|^2 \mathrm{d} \mathbf{s}_i + \omega_{\Gamma}
\int_{\Gamma(t)}
\bigg(\big| \tilde{\bm{v}}_1 - \tilde{\bm{v}}_2 - \bm{g}_1 \big|^2 +\notag\\
& \qquad  \big| {\bS}_1(\tilde{\bu}_1) \mathbf{n}_1 +
{\bS}_2(\tilde{\bu}_2) \mathbf{n}_2 - \bm{g}_2 \big|^2\bigg)
\mathrm{d} \mathbf{s} \bigg\} \mathrm{d}t \label{LS4twophase}\\
&= \sum_{i=1}^2 \omega_{\mathcal{I}_i} \int_{\Omega_i(0)} \rho_i
\left|\mathcal{I}_i(\tilde{\bm{u}}_i) \right|^2 \mathrm{d}
\mathbf{x}_i + \int_{0}^T \bigg\{ \sum_{i=1}^2
\omega_{\mathcal{L}_i} \int_{\Omega_i(t)}
\big|\mathcal{L}_i(\tilde{\bm{u}}_i)
\big|^2 \mathrm{d} \mathbf{x}_i +\notag\\
& \qquad \sum_{i=1}^2 \omega_{\mathcal{B}_i} \int_{\partial
\Omega_i(t) \backslash \Gamma(t)} \big|
\mathcal{B}_i(\tilde{\bm{u}}_i)\big|^2 \mathrm{d} \mathbf{s}_i +
\omega_{\Gamma} \int_{\Gamma(t)} \big|
\mathcal{L}_{\Gamma}(\tilde{\bm{u}}_1,\tilde{\bm{u}}_2) \big|^2
\mathrm{d} \mathbf{s} \bigg\} \mathrm{d}t, \label{LS4twophase1}
\end{align}
where $\omega_{\mathcal{L}_i}$, $\omega_{\Gamma}$,
$\omega_{\mathcal{B}_i}$ and $\omega_{\mathcal{I}_i}$ are positive
weight coefficients that balance the relative importance of residual
terms of Navier-Stokes equations inside the fluid subdomains
$\Omega_i(t)$, of interface conditions over the interface
$\Gamma(t)$, of boundary conditions over the external boundaries
$\partial \Omega_i(t) \backslash \Gamma(t)$ and of initial
conditions over the initial domains $\Omega_i(0)$, respectively, in
the loss functional. These weight coefficients can be tuned by
common
strategies~\cite{wang2021understanding,wang2022and,mcclenny2020self,ZhuHuSun2023}
including: (1) the normalization by means of the reciprocal of
maximum parameter values in each residual term; (2) the dimensional
consistency across all residual terms; and (3) the adaptive weight
adjustment during training.

Then, the least-squares minimization problem associated with
\eqref{LS4twophase} is formed as follows: find $\bm{u}_i :=
(\bm{v}_i, p_i) \in H^1(0,T; \left(H^2(\Omega_i(t))\right)^d) \times
L^\infty(0,T; H^1(\Omega_i(t)))$, $i=1,2$, such that
\begin{equation}
\mathcal{R}(\bm{u}_1, \bm{u}_2) = \min_{\tilde{\bm{u}}_i \in
H^1(0,T; (H^2(\Omega_i(t)))^d) \times L^\infty(0,T;
H^1(\Omega_i(t))), \, i=1,2} \mathcal{R}(\tilde{\bm{u}}_1,
\tilde{\bm{u}}_2).
\end{equation}

\subsection{Neural Network Architecture}
Our proposed PINNs/meshfree approach employs fully connected neural
networks to approximate primary solutions (velocity and pressure of
each fluid phase) of the presented two-phase flow moving interface
problem in each fluid subdomain, piecewisely, without requiring a
mesh generation. Instead, a $(d+1)$-dimensional spatiotemporal
sampling points/training set, which adapts to the moving interface,
is uniformly or randomly distributed in each evolving fluid
subdomain. Namely, the proposed approach applies separate neural
networks to each fluid subdomain, allowing for different solution
characteristics across the moving interface.

The employed neural network architecture extends the framework that
is established in~\cite{HeLinHu2022,ZhuHuSun2023} for stationary
interface problems to the two-phase flow moving interface problems
by distributing sampling points in two $(d+1)$-dimensional
spatiotemporal fluid subdomains that adapt to the moving interface
in a subtle way. In particular, for the first case of interface
motion whose moving velocity $\bw(\bx,t)$ is explicitly prescribed,
we always know a priori the current position of moving interface
$\Gamma(t)$, thus know the current shape of each fluid subdomain
$\Omega_i(t)$ for $t\in[0,T]$, $i=1,2$, where two sets of sampling
points can be pre-distributed on both sides of the moving interface,
accordingly. As for the second case of interface motion whose moving
velocity $\bw(\bx,t)$ is determined by the fluid velocity
$\bv_i(\bx,t),~i=1,2$, i.e., $\bw=\bv_1=\bv_2$ across the moving
interface $\Gamma(t)$, we need to dynamically track the position of
$\Gamma(t)$ along time by conducting a nonlinear iteration to find
out $\bw|_{ \Gamma(t)}$, as shown later in Algorithm
\ref{alg:solution-dependent-tracking}. Once the moving interface is
tracked, we are then able to establish a $(d+1)$-dimensional
spatiotemporal sampling points set in each evolving fluid subdomain,
accordingly.

By incorporating both spatial and temporal coordinates as inputs to
form the input layer based upon aforementioned spatiotemporal
sampling points/training set in each evolving fluid subdomain, which
is denoted as $\mathbf{X}_i := (\mathbf{x}_i, t) \in
\overline{\Omega_i(t)} \times [0,T] \subset \mathbb{R}^{d+1},\,
i=1,2$, we first construct a fully connected, separated neural
network architecture for each fluid phase $i$ on either side of the
moving interface that consists of different numbers of hidden layers
with linear transformations and nonlinear activation functions.

Introduce the linear transformation $\mathbf{T}_i^l:
\mathbb{R}^{n_i^l} \rightarrow \mathbb{R}^{n_i^{l+1}}$ to the $l$-th
layer, $l = 1, \ldots, L_i$, for each fluid phase $i$ as follows:
\begin{equation}
\mathbf{T}^l_i(\mathbf{X}^l_i) = \mathbf{W}_i^l \mathbf{X}_i^l +
\mathbf{b}_i^l, \quad \text{for } \mathbf{X}_i^l \in
\mathbb{R}^{n_i^l},\,i=1,2,
\end{equation}
where $\mathbf{W}^l_i \in \mathbb{R}^{n_i^{l+1} \times n_i^l}$ are
the weight matrices and $\mathbf{b}^l_i \in \mathbb{R}^{n_i^{l+1}}$
are the bias vectors, $\mathbf{X}_i^{1}=\mathbf{X}_i\in
\mathbb{R}^{n_i^1}=\mathbb{R}^{d+1}$ is the input vector formed by
the spatiotemporal training set of fluid phase $i$. Combined with a
nonlinear activation function $\sigma: \mathbb{R} \rightarrow
\mathbb{R}$, the $l$-th layer of neural networks for the fluid phase
$i$ is then defined as:
\begin{equation}
\mathbf{N}_i^l(\mathbf{X}^l_i) =
\sigma(\mathbf{T}^l_i(\mathbf{X}_i^l)), \quad \text{for } l = 1,
\ldots, L_i,
\end{equation}
where $\sigma$ is applied component-wisely. A complete $L$-layer
deep neural networks is thus constructed as follows for the fluid
phase $i$:
\begin{equation} \label{eqn:NN-structure}
\mathcal{NN}_i(\mathbf{X}_i; \bm{\Theta}_i) = \mathbf{T}_i^L \circ
\mathbf{N}_i^{L-1} \circ \cdots \circ \mathbf{N}_i^2 \circ
\mathbf{N}_i^1(\mathbf{X}_i),\quad i=1,2,
\end{equation}
where 
$\bm{\Theta}_i= \{\mathbf{W}^l_i, \mathbf{b}^l_i : l = 1, \ldots,
L_i\}$ represents all network parameters of the fluid phase $i$, and
$N = \sum_{i=1}^2\sum_{l=1}^{L_i} n_i^{l+1}(n_i^l + 1)$ is the total
number of parameters of a fully connected neural network for the
presented two-phase flow moving interface problem
\eqref{eqn:interface-model}.

\subsection{Discrete formulation based on neural networks}
Now we can employ the above well-defined neural network architecture
to approximate the primary solution
${\tilde{\bm{u}}}_i=(\tilde{\bv}_i,\tilde p_i)$ of each fluid phase
$i$, $i=1,2$, as follows:
\begin{equation} \label{eqn:uNNs}
\tilde{\bm{u}}_1 \approx
\mathcal{U}_{\mathcal{NN}_1}(\mathbf{X}_{1}; \bm{\Theta}_1), \quad
\tilde{\bm{u}}_2 \approx
\mathcal{U}_{\mathcal{NN}_2}(\mathbf{X}_{2}; \bm{\Theta}_2).
\end{equation}
Specifically, we approximate the fluid velocity $\tilde{\bv}_i$ and
the fluid pressure $\tilde p_i$ using separate neural networks for
each fluid phase $i$, define as
\begin{equation} \label{eqn:v-pNNs}
\tilde{\bm{v}}_i \approx
\mathcal{V}_{\mathcal{NN}_i}(\mathbf{X}_{i}; \bm{\Theta}_i), \quad
\tilde{p}_i \approx \mathcal{P}_{\mathcal{NN}_i}(\mathbf{X}_{i};
\bm{\Theta}_i),\quad i=1,2.
\end{equation}
Then, the combined discrete solution vector is defined as:
\begin{equation}
\mathcal{U}_{\mathcal{NN}_i}(\mathbf{X}_{i}; \bm{\Theta}_i) :=
\left(\mathcal{V}_{\mathcal{NN}_i}(\mathbf{X}_{i}; \bm{\Theta}_i),
\mathcal{P}_{\mathcal{NN}_i}(\mathbf{X}_{i}; \bm{\Theta}_i)\right),
\quad i=1,2,
\end{equation}
where we set $n_i^1= d+1$ for the input layer and $n_i^{L+1} = d+1$
for the output layer to accommodate both velocity and pressure
components of of each fluid phase $i$, $i=1,2$.

Since the least-squares loss functional~\eqref{LS4twophase} contains
spatiotemporal integrals that cannot be computed exactly, we
approximate them using Monte Carlo integration with sampling points.
Thus, the following mean squared error (MSE) formulations are
introduced to approximate the corresponding spatiotemporal integrals
of residual terms in~\eqref{LS4twophase}:
\begin{align*}
\mathcal{F}_{\mathcal{I}_i}(\bm{\Theta}_i) & :=
\frac{1}{M_{\mathcal{I}_i}} \sum_{k=1}^{M_{\mathcal{I}_i}} \rho_i
\left| \bm{v}_i^0(\mathbf{X}_{i,k}) -
\mathcal{V}_{\mathcal{NN}_i}(\mathbf{X}_{i,k}; \bm{\Theta}_i)
\right|^2\approx \int_{\Omega_i(0)} \rho_i \left|\mathcal{I}_i(
\tilde{\bm{u}}_i) \right|^2 \mathrm{d} \mathbf{x}_i, \\
\mathcal{F}_{\mathcal{L}_i}(\bm{\Theta}_i) & :=
\frac{1}{M_{\mathcal{L}_i}} \sum_{k=1}^{M_{\mathcal{L}_i}} \bigg[\left|\rho_i \left( \frac{\partial \mathcal{V}_{\mathcal{NN}_i}(\mathbf{X}_{i,k}; \bm{\Theta}_i)}{\partial t} + \mathcal{V}_{\mathcal{NN}_i}(\mathbf{X}_{i,k}; \bm{\Theta}_i) \cdot \nabla \mathcal{V}_{\mathcal{NN}_i}(\mathbf{X}_{i,k}; \bm{\Theta}_i) \right) \right. \\
& \qquad \left. - \nabla \cdot
\bS_i(\mathcal{V}_{\mathcal{NN}_i}(\mathbf{X}_{i,k}; \bm{\Theta}_i),
\mathcal{P}_{\mathcal{NN}_i}(\mathbf{X}_{i,k}; \bm{\Theta}_i)) -
\bm{f}_i(\mathbf{X}_{i,k}) \right|^2
\\& \qquad 
+\left| \nabla \cdot \mathcal{V}_{\mathcal{NN}_i}(\mathbf{X}_{i,k};
\bm{\Theta}_i) \right|^2\bigg] \approx \int_{0}^T \int_{\Omega_i(t)}
\big|\mathcal{L}_i(\tilde{\bm{u}}_i) \big|^2
\mathrm{d}\mathbf{x}_i\,\mathrm{d}t,\\
\mathcal{F}_{\mathcal{B}_i}(\bm{\Theta}_i) & :=
\frac{1}{M_{\mathcal{B}_i}} \sum_{k=1}^{M_{\mathcal{B}_i}} \left|
\bm{v}_i^b(\mathbf{X}_{i,k}) -
\mathcal{V}_{\mathcal{NN}_i}(\mathbf{X}_{i,k}; \bm{\Theta}_i)
\right|^2
\approx \int_{0}^T\int_{\partial \Omega_i(t) \backslash
\Gamma(t)} \big| \mathcal{B}_i(\tilde{\bm{u}}_i) \big|^2 \mathrm{d} \mathbf{s}_i\,\mathrm{d}t, \\
\mathcal{F}_{\Gamma}(\bm{\Theta}_1, \bm{\Theta}_2) & :=
\frac{1}{M_{\Gamma}} \sum_{k=1}^{M_{\Gamma}} \big[\left|
\mathcal{V}_{\mathcal{NN}_1}(\mathbf{X}_{i,k}; \bm{\Theta}_1) -
\mathcal{V}_{\mathcal{NN}_2}(\mathbf{X}_{i,k}; \bm{\Theta}_2) -
\bm{g}_1(\mathbf{X}_{i,k}) \right|^2
\\& \qquad 
+\left| \bS_1(\mathcal{V}_{\mathcal{NN}_1}(\mathbf{X}_{i,k};
\bm{\Theta}_1), \mathcal{P}_{\mathcal{NN}_1}(\mathbf{X}_{i,k};
\bm{\Theta}_1)) \mathbf{n}_1
+\bS_2(\mathcal{V}_{\mathcal{NN}_2}(\mathbf{X}_{i,k};
\bm{\Theta}_2),\right.\\
& \qquad \left. \mathcal{P}_{\mathcal{NN}_2}(\mathbf{X}_{i,k};
\bm{\Theta}_2)) \mathbf{n}_2 - \bm{g}_2(\mathbf{X}_{i,k})
\right|^2\big] \approx\int_0^T\int_{\Gamma(t)} \big|
\mathcal{L}_\Gamma(\tilde{\bm{v}}_1, \tilde{\bm{v}}_2)\big|^2
\mathrm{d} \mathbf{s}\, \mathrm{d}t,
\end{align*}
where $M_{\mathcal{L}_i}$, $M_{\Gamma}$, $M_{\mathcal{B}_i}$ and
$M_{\mathcal{I}_i}$ represent the number of sampling points for the
current interior subdomains, interface, external boundaries and
initial subdomains, respectively. The total number of sampling
points is: $M = \sum\limits_{i=1}^2 (M_{\mathcal{L}_i} +
M_{\mathcal{B}_i} + M_{\mathcal{I}_i}) + M_{\Gamma}$.

Replacing integrals in the least-squares
formulation~\eqref{LS4twophase} with corresponding MSE
approximations as suggested above, we then obtain the following
discrete optimization problem for the two-phase flow moving
interface problem,
\begin{eqnarray}
\mathcal{F}(\bm{\Theta}_1^*,
\bm{\Theta}_2^*)&=&\min\limits_{\bm{\Theta}_1,
\bm{\Theta}_2} \mathcal{F}(\bm{\Theta}_1, \bm{\Theta}_2)\notag\\
&=& \min\limits_{\bm{\Theta}_1, \bm{\Theta}_2} \left[
\sum\limits_{i=1}^2 \left(\omega_{\mathcal{I}_i}
\mathcal{F}_{\mathcal{I}_i}(\bm{\Theta}_i) + \omega_{\mathcal{L}_i}
\mathcal{F}_{\mathcal{L}_i}(\bm{\Theta}_i) + \omega_{\mathcal{B}_i}
\mathcal{F}_{\mathcal{B}_i}(\bm{\Theta}_i)\right)+\right.\notag\\
&&\left.\qquad\qquad\quad\omega_{\Gamma}
\mathcal{F}_{\Gamma}(\bm{\Theta}_1, \bm{\Theta}_2)
\right],\label{eqn:DNN-interface-method}
\end{eqnarray}
which can be solved using gradient-based methods. In this paper, we
employ the Adam optimizer~\cite{kingma2014adam} that is an adaptive
variant of stochastic gradient descent method. The gradients are
computed using automatic differentiation to allow for efficient
computation of derivatives with respect to the network parameters.
Upon convergence to the optimal parameters $\bm{\Theta}_1^*$ and
$\bm{\Theta}_2^*$, we obtain the neural network approximations,
$\mathcal{U}_{\mathcal{NN}_1}^{*}(\mathbf{X}_1;
\bm{\Theta}_1^*)\approx\bm u_1$ and
$\mathcal{U}_{\mathcal{NN}_2}^{*}(\mathbf{X}_2;
\bm{\Theta}_2^*)\approx\bm u_2$ that represent the numerical
solution of the studied two-phase flow moving interface
problem~\eqref{eqn:interface-model}.

\section{Error analysis}\label{sec:error}
In this section, we develop a comprehensive convergence analysis for
the proposed PINNs/meshfree method applied to the presented
two-phase flow moving
interface problems. 
We first introduce the following two assumptions: one is the famous
Universal Approximation Theorem \cite{Cybenko1989,HORNIK1989} that
guarantees neural networks can approximate a very wide class of
functions to any desired degree of accuracy; the other one is to
characterize the accuracy of numerical quadrature scheme that is
adopted for evaluating the spatiotemporal integrals in the loss
functional, numerically.
\begin{assumption} The Universal Approximation Theorem (UAT) \cite{Cybenko1989,HORNIK1989,LESHNO1993}.\label{assumption:UAT}
A family of neural network functions is dense in the space of
continuous functions on a compact domain, i.e., for any continuous
function $f$ defined on a compact subset $\bm K\subset \mathbb{R}^n$
and for any tolerance $\varepsilon>0$, there exists a feedforward
neural network $\hat f$ such that
$$
\|f(x)-\hat f(x)\|_{L^\infty(\bm K)}<\varepsilon.
$$
This approximation is possible with a network that has at least one
hidden layer and an appropriate activation function, and the error
can be made arbitrarily small by increasing the neural network's
capacity.
\end{assumption}

\begin{assumption} \label{assumption:quad}
Consider the following quadrature rule to approximate an integral
$I(g) := \int_{\mathfrak{D}} g(\bm{x}) \ \mathrm{d}\bm{x}$, where
$\mathfrak{D}$ is a spatiotemporal domain, by virtue of $M$
quadrature points $\bm{x}_k \in \mathfrak{D}$ and its corresponding
quadrature weight $\omega_k$ for $1 \leq k \leq M$,
    \begin{equation*}
        I_M(g) : = \sum_{k=1}^M \omega_k g(\bm{x}_k).
    \end{equation*}
Then, the quadrature error between $I(g)$ and $I_M(g)$ is assumed to
be held as
    \begin{equation} \label{eqn:quad}
        \vert  I(g) - I_M(g) \vert \leq C_{quad} \, M^{-\alpha}, \quad \alpha > 0,
    \end{equation}
where the constant $C_{quad}$ depends on the dimension of the domain
and the property of the integrand $g(\bx)$.
\end{assumption}

Now we establish the following convergence theorem for the developed
PINNs approximation to the presented two-phase flow moving interface
problem.
\begin{theorem} \label{thm:error-two-phase-fluid-prescribed}
Let $\bm{v}_i \in
H^1\big(0,T;\left(H^2({\Omega_i(t)})\right)^d\big)$ and $p_i \in
L^\infty(0,T;$ $H^1({\Omega_i(t)}))$ be the classical solution of
the two-phase flow moving interface
problem~\eqref{eqn:interface-model}, and
let~$\mathcal{V}_{\mathcal{NN}_i}^{*}\in H^1\big(0,T;\left((H^2\cap
W^{1,\infty})({\Omega_i(t)})\right)^d\big)$
and~$\mathcal{P}_{\mathcal{NN}_i}^{*}\in L^\infty(0,T;$
$H^1({\Omega_i(t)}))$ be the numerical approximations obtained by
the PINNs/meshfree approach (\ref{eqn:DNN-interface-method}),
$i=1,2$. Then the following error estimation holds:
\begin{align} %
    & \sum_{i=1}^2 \| \bm{v}_i - \mathcal{V}_{\mathcal{NN}_i}^{\ast} \|_{
    {L^2(0,T;L^2(\Omega_i(t)))}}^2  \notag\\
    &\leq C\bigg[\sum_{i=1}^2\bigg(\left( \mathcal{F}_{\mathcal{L}_i}(\bm{\Theta}_i^*) \right)^{\frac{1}{2}}
    + \left( \mathcal{F}_{\mathcal{B}_i}(\bm{\Theta}_i^*) \right)^{\frac{1}{2}}
    + \mathcal{F}_{\mathcal{I}_i}(\bm{\Theta}_i^*)\bigg)+
    \left(\mathcal{F}_{\Gamma}(\bm{\Theta}_1^*,\bm{\Theta}_2^*) \right)^{\frac{1}{2}}+\notag\\
    & \qquad\ \
      \sum_{i=1}^2\bigg( C_{quad}^{\mathcal{L}_i} M_{\mathcal{L}_i}^{-\frac{\alpha_{\mathcal{L}_i}}{2}}
     + C_{quad}^{\mathcal{B}_i} M_{\mathcal{B}_i}^{-\frac{\alpha_{\mathcal{B}_i}}{2}}
     + C_{quad}^{\mathcal{I}_i} M_{\mathcal{I}_i}^{-\alpha_{\mathcal{I}_i}}\bigg)
    + C_{quad}^{\Gamma} M_{\Gamma}^{-\frac{\alpha_{\Gamma}}{2}}
    \bigg].\label{theorem_estimate}
\end{align}
where $C>0$ is a generic constant depending on $\| \bv_i
\|_{H^1(\left(H^2(\Omega_i(t))\right)^d)}$, $\| p_i
\|_{L^\infty(H^1(\Omega_i(t)))}$,
$\rho_i$, $\mu_i$, $\Omega_i(t)\ (i=1,2)$, the Korn constant
$C_{Korn}$, the trace constant $C_{trace}$ and the dimension $d$. If
the interface motion is prescribed with the moving velocity
$\bw(\bx,t)$, then $C$ also depends on
$\|\bw\|_{L^\infty(\Gamma(t))}$.
\end{theorem}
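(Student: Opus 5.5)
We plan a standard stability-plus-quadrature argument. The error $\be_i := \bv_i - \mathcal{V}_{\mathcal{NN}_i}^{*}$ and $q_i := p_i - \mathcal{P}_{\mathcal{NN}_i}^{*}$ satisfies the residual system obtained by subtracting the PINN residuals from the exact equations (\ref{eqn:interface-model1}). In $\Omega_i(t)$ we get
\begin{equation*}
\rho_i\Big(\frac{\partial \be_i}{\partial t} + \bv_i\cdot\nabla\be_i + \be_i\cdot\nabla\mathcal{V}_{\mathcal{NN}_i}^{*}\Big) - \nabla\cdot\bS_i(\be_i,q_i) = -\mathcal{L}_i^{(1)}(\mathcal{U}^*_{\mathcal{NN}_i}),
\end{equation*}
with $\nabla\cdot\be_i = -\mathcal{L}_i^{(2)}$. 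The remaining data are: interface jump $\be_1-\be_2 = -(\mathcal{L}_\Gamma)^{(1)}$, traction jump $\bS_1(\be_1,q_1)\bn_1+\bS_2(\be_2,q_2)\bn_2 = -(\mathcal{L}_\Gamma)^{(2)}$, boundary data $\be_i = -\mathcal{B}_i$, and initial data $\be_i(0) = -\mathcal{I}_i$.

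We test with $\be_i$ on $\Omega_i(t)$, sum over $i$, and integrate in time.

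\textbf{Time derivative.} The key tool is the Reynolds transport theorem on the evolving domain:
\begin{equation*}
\frac{d}{dt}\int_{\Omega_i(t)}\frac{\rho_i}{2}|\be_i|^2 = \int_{\Omega_i(t)}\rho_i\,\frac{\partial\be_i}{\partial t}\cdot\be_i + \int_{\Gamma(t)}\frac{\rho_i}{2}|\be_i|^2\,\bw\cdot\bn_i ,
\end{equation*}
since the outer boundary is fixed. The extra interface flux is the term specific to the moving interface. We bound it by $\|\bw\|_{L^\infty(\Gamma(t))}\|\be_i\|^2_{L^2(\Gamma(t))}$ and then use a trace inequality $\|\be_i\|^2_{L^2(\Gamma)}\le C_{trace}\|\be_i\|_{L^2}\|\be_i\|_{H^1}$ with Young's inequality, absorbing part of it into the viscous term. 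This is where $C$ acquires its dependence on $\|\bw\|_{L^\infty}$.

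\textbf{Convection.} The term $\bv_i\cdot\nabla\be_i\cdot\be_i$ is integrated by parts. It produces $-\frac12(\nabla\cdot\bv_i)|\be_i|^2$, which vanishes, plus boundary terms, which are treated like the flux term above. The term $\be_i\cdot\nabla\mathcal{V}^*\cdot\be_i$ is bounded by $\|\nabla\mathcal{V}^*_{\mathcal{NN}_i}\|_{L^\infty}\|\be_i\|^2$; this is why $W^{1,\infty}$ regularity of the network is assumed.

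\textbf{Stress.} Integrating $-\nabla\cdot\bS_i\cdot\be_i$ by parts gives three pieces:
\begin{itemize}
\item $2\mu_i\|\bD(\be_i)\|^2$, which is coercive through the Korn constant $C_{Korn}$;
\item $-\int q_i\nabla\cdot\be_i = \int q_i\mathcal{L}_i^{(2)}$, bounded by $\|q_i\|\,\|\mathcal{L}_i\|$;
\item boundary terms.
\end{itemize}
On $\Gamma(t)$ we split $\bS_1\bn_1\cdot\be_1+\bS_2\bn_2\cdot\be_2 = (\bS_1\bn_1+\bS_2\bn_2)\cdot\be_1 + \bS_2\bn_2\cdot(\be_2-\be_1)$. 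Each piece is a residual times a quantity bounded by the trace theorem and the a priori norms of $\bv_i$, $p_i$ and the network. Hence it is $\le C\|\mathcal{L}_\Gamma\|_{L^2(\Gamma)}$, and this is the source of the square roots in the estimate. On the outer boundary, $\bS_i\bn\cdot\be_i$ is likewise $\le C\|\mathcal{B}_i\|_{L^2}$.

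The right-hand side $\mathcal{L}_i^{(1)}\cdot\be_i$ is bounded by $C\|\mathcal{L}_i\|$ using boundedness of $\be_i$. Collecting terms and applying Gronwall on $[0,T]$ gives
\begin{equation*}
\sup_t\|\be_i\|^2 + \|\be_i\|^2_{L^2(L^2)} \le C\Big(\|\mathcal{I}_i\|^2 + \|\mathcal{L}_i\|_{L^2L^2} + \|\mathcal{B}_i\|_{L^2L^2} + \|\mathcal{L}_\Gamma\|_{L^2L^2}\Big).
\end{equation*}
The initial term enters squared and the others linearly, which matches the powers in (\ref{theorem_estimate}).

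\textbf{Quadrature.} Each continuous residual norm is replaced by its MSE value using Assumption~\ref{assumption:quad}, e.g.
\begin{equation*}
\|\mathcal{L}_i\|^2 \le \mathcal{F}_{\mathcal{L}_i}(\bm\Theta_i^*) + C^{\mathcal{L}_i}_{quad}M_{\mathcal{L}_i}^{-\alpha_{\mathcal{L}_i}} .
\end{equation*}
Taking square roots with $\sqrt{a+b}\le\sqrt a+\sqrt b$ yields the halved exponents; the initial term, kept squared, yields the full exponent.

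The main obstacle is the moving-interface bookkeeping. First, the Reynolds flux term must be controlled via the trace inequality and absorbed by Korn coercivity. Second, the interface traction terms must be bounded linearly by the residual, which needs uniform a priori bounds on $\bS_i(\mathcal{U}^*)$ on $\Gamma(t)$. We intend to simply fold these bounds, together with the pressure error $\|q_i\|$, into the generic constant $C$.
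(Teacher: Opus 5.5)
Your proposal is correct and follows essentially the same argument as the paper. Both proceed through the error equations, the energy test with the Reynolds transport theorem, residual-linear bounds on the interface, boundary and source terms (with the remaining error and traction norms folded into $C$), Gr\"onwall, and then the quadrature assumption. The differences are only local: you integrate the convection term by parts rather than using Young's inequality, and you bound the interface flux term directly via $\|\bw\|_{L^\infty(\Gamma(t))}$, the multiplicative trace inequality and Korn absorption, whereas the paper first splits $\rho_1|\be_1|^2-\rho_2|\be_2|^2$ using the velocity-jump residual and handles the solution-driven case separately via the divergence theorem. Your direct bound also covers the solution-driven case, provided you note that there $\bw$ is a fluid velocity whose sup norm is already among the quantities you fold into $C$ for the convection terms.
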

\begin{proof}
Consider the velocity error $\bm{e}_i^{\bm{v}} := \bm{v}_i -
\mathcal{V}_{\mathcal{NN}_i}^{*}$ and the pressure error $e_i^p :=
p_i - \mathcal{P}_{\mathcal{NN}_i}^{*}$, $i=1,2$. Then the following
system of error equations can be derived from
(\ref{eqn:interface-model1}):
\begin{align}
{\rho_i} \left( \frac{\partial \bm{e}_i^{\bm{v}}}{\partial t} +
(\bv_i \cdot \nabla) \be_i^{\bv} +
(\be_i^{\bv} \cdot \nabla) \mathcal{V}^{*}_{\mathcal{NN}_i}\right)&\notag\\
- \nabla \cdot 2 \mu_i \bD(\be_i^{\bv}) + \nabla e_i^p &= \mathcal{R}_{i}^{\bv}, \quad \text{in} \ \Omega_i(t)\times(0,T], \notag \\
\nabla \cdot \bm{e}_i^{\bv} &= \mathcal{R}_{i}^{\nabla \cdot}, \ \, \text{in} \ \Omega_i(t)\times(0,T], \nonumber \\
\be_{1}^{\bv} - \bm{e}_2^{\bv} &= \mathcal{R}_{\Gamma}^{\bv}, \quad \text{on} \ \Gamma(t)\times[0,T], \label{eqn:err-v} \\
\bm{\sigma}_1(\be_1^{\bv}, e_1^p) \bn_1 + \bm{\sigma}_2(\be_2^{\bv}, e_2^p) \bn_2 &= \mathcal{R}_{\Gamma}^{\sigma}, \quad \text{on} \ \Gamma(t)\times[0,T], \notag \\
e_i^{\bm{v}} &= \mathcal{R}_{i}^{\mathcal{B}}, \quad \text{on} \ \partial \Omega_i(t)\backslash \Gamma(t)\times[0,T], \nonumber \\
e_i^{\bm{v}}(\bm{x}_i, 0) & = \mathcal{R}_{i}^{\mathcal{I}}, \quad
\text{in} \ \Omega_i(0), \nonumber
\end{align}
where the residual terms on the right hand side of (\ref{eqn:err-v})
are defined as follows, respectively, $\begin{pmatrix}
    \mathcal{R}_{i}^{\bv} \\
    \mathcal{R}_{i}^{\nabla \cdot}
\end{pmatrix} := -\mathcal{L}_{i}(\mathcal{U}_{\mathcal{NN}_i}^{\ast})$,
$\begin{pmatrix}
    \mathcal{R}_{\Gamma}^{\bv} \\
    \mathcal{R}_{\Gamma}^{\bm{\sigma}}
\end{pmatrix}
:=
-\mathcal{L}_{\Gamma}(\mathcal{U}_{\mathcal{NN}_1}^{\ast},\mathcal{U}_{\mathcal{NN}_2}^{\ast})$,
$\mathcal{R}_{i}^{\mathcal{B}} :=
-\mathcal{B}_i(\mathcal{U}_{\mathcal{NN}_i}^{\ast})$ and
$\mathcal{R}_{i}^{\mathcal{I}} :=
-\mathcal{I}_i(\mathcal{U}_{\mathcal{NN}_i}^{\ast})$, $i = 1,2$.

Multiply the momentum equation (\ref{eqn:err-v})$_1$ by
$\be_{i}^{\bv}$, integrate over the moving domain $\Omega_i(t)$ for
$i=1,2$, conduct integration by parts for the fluid stress term, and
utilize (\ref{eqn:err-v})$_2$, then apply Reynolds transport theorem
to the time derivative of kinetic energy term, appropriately, yield
\begin{align}
&\frac{\mathrm{d}}{\mathrm{d}t} \int_{\Omega_i(t)} \rho_i \frac{|
\be_i^{\bv} |^2}{2} \mathrm{d}\mathbf{x}_i -
\int_{\partial\Omega_i(t)} \rho_i \frac{| \be_i^{\bv} |^2}{2} \bw_i
\cdot \mathbf{n}_i \mathrm{d} s_i
+ \int_{\Omega_i(t)} \rho_i \left( (\bv_i \cdot \nabla) \be_i^{\bv} \right) \cdot \be_i^{\bv} \mathrm{d}\mathbf{x}_i \notag\\
&+ \int_{\Omega_i(t)} \rho_i ( (\be_i^{\bv} \cdot \nabla)
\mathcal{V}_{\mathcal{NN}_i}^{*} ) \cdot \be_i^v
\mathrm{d}\mathbf{x}_i +\int_{\Omega_i(t)} 2 \mu_i |
\bD(\be_i^{\bv}) |^2\mathrm{d}\mathbf{x}_i- \int_{\Omega_i(t)}
\mathcal{R}_{i}^{\nabla \cdot} e_i^p \mathrm{d}\mathbf{x}_i \label{energy-error}\\
&- \int_{\partial \Omega_i(t) \backslash \Gamma(t)}
\mathcal{R}_{i}^{\mathcal{B}} \cdot \bm{\sigma}_i(\be_i^{\bv},
e_i^p) \bn_i \mathrm{d} s_i - \int_{\Gamma(t)} \be_i^{\bv}\cdot
\bm{\sigma}_i(\be_i^{\bv}, e_i^p) \bn_i \mathrm{d} s =
\int_{\Omega_i(t)} \mathcal{R}_{i}^{\bv} \cdot \be_i^{\bv}
\mathrm{d}\mathbf{x}_i,\nonumber
\end{align}
where Reynolds transport theorem is applied to handle the kinetic
energy term $\frac{1}{2}|\be_i^{\bv}|^2$ in a moving domain
$\Omega_i(t)$ with the moving boundary velocity $\bw_i$ on
$\partial\Omega_i(t)$, as shown below:
\begin{equation}\label{Reynolds-Transport}
\frac{\mathrm{d}}{\mathrm{d}t} \int_{\Omega_i(t)}
\frac{|\be_i^{\bv}|^2 }{2} \mathrm{d}\mathbf{x}_i =
\int_{\Omega_i(t)} \frac{\partial\be_i^{\bv}}{\partial
t}\cdot\be_i^{\bv} \mathrm{d}\mathbf{x}_i +
\int_{\partial\Omega_i(t)} \frac{|\be_i^{\bv}|^2}{2} \bw_i \cdot
\mathbf{n}_i \mathrm{d}s_i,
\end{equation}
with $\mathbf{n}_i$ denoting the outward unit normal vector on the
boundary $\partial\Omega_i(t)$.

Sum (\ref{energy-error}) over $i=1,2$, results
\begin{align}
& \frac{\mathrm{d}}{\mathrm{d}t} \left[ \sum_{i=1}^2
\int_{\Omega_i(t)} \rho_i \frac{| \be_i^{\bv} |^2}{2} \mathrm{d}
\bx_i \right] +\sum_{i=1}^2 2 \mu_i \int_{\Omega_i(t)} |
\bD(\be_i^{\bv})
|^2 \mathrm{d} \bx_i= \notag\\
&\quad \sum_{i=1}^2 \int_{\partial\Omega_i(t)} \rho_i \frac{|
\be_i^{\bv} |^2}{2} \bw_i \cdot \mathbf{n}_i \mathrm{d} s_i
-\sum_{i=1}^2 \int_{\Omega_i(t)} \rho_i ((\bv_i \cdot \nabla)
\be_i^{\bv} ) \cdot \be_i^{\bv} \mathrm{d} \bx_i\notag \\
&\quad- \sum_{i=1}^2 \int_{\Omega_i(t)} \rho_i ( (\be_i^{\bv} \cdot
\nabla) \mathcal{V}_{\mathcal{NN}_i}^{*}) \cdot \be_i^{\bv}
\mathrm{d} \bx_i
+ \sum_{i=1}^2 \int_{\Omega_i(t)}
\mathcal{R}_{i}^{\nabla \cdot} e_i^p \mathrm{d}\bx_i\notag \\
&\quad+ \sum_{i=1}^2 \int_{\partial \Omega_i(t) \backslash
\Gamma(t)} \mathcal{R}_{i}^{\mathcal{B}} \cdot
\bm{\sigma}_i(\be_i^{\bv}, e_i^p) \bn_i \mathrm{d} s_i +
\int_{\Gamma(t)} \mathcal{R}_{\Gamma}^{\bm{\sigma}}\cdot \left(
\frac{\be_1^{\bv} + \be_2^{\bv}}{2} \right) \mathrm{d}s \notag\\
& \quad + \int_{\Gamma(t)} \mathcal{R}_{\Gamma}^{\bv}\cdot \left(
\frac{ \bm{\sigma}_1(\be_1^{\bv}, e_1^p) \bn_1 -
\bm{\sigma}_2(\be_2^{\bv}, e_2^p)\bn_2 }{2} \right) \mathrm{d} s +
\sum_{i=1}^2 \int_{\Omega_i(t)} \mathcal{R}_{i}^{\bv} \cdot
\be_i^{\bv} \mathrm{d} \bx_i\notag\\
&=\sum_{k=1}^8 G_k,\label{estimate4twophase}
\end{align}
where the moving boundary velocity $\bw_i\, (i=1,2)$ in the term
$G_1$ depends on the type of interface motion. In what follows, we
estimate terms $G_2$ -- $G_8$ first, then come back to deal with the
term $G_1$ based upon aforementioned two cases of interface motion,
specifically. Apply Cauchy-Schwarz inequality, Young's inequality,
Korn's inequality and Assumption \ref{assumption:UAT}, yields
\begin{align}
   \left|G_2\right| &\leq \sum_{i=1}^2\left(\varepsilon \int_{\Omega_i(t)} \rho_i |(\bv_i \cdot \nabla) \be_i^{\bv}|^2 \mathrm{d} \bx_i
   + 
   C\int_{\Omega_i(t)} \rho_i |\be_i^{\bv}|^2 \mathrm{d} \bx_i\right)  \notag\\
    & \leq \sum_{i=1}^2\left(\varepsilon C_{Korn} \rho_i \| \bv_i \|^2_{L^\infty(\Omega_i(t))}
    \int_{\Omega_i(t)} |\bD(\be_i^{\bv})|^2 \mathrm{d} \bx_i
    + 
    C\int_{\Omega_i(t)} \rho_i |\be_i^{\bv}|^2 \mathrm{d} \bx_i \right),\label{G2}\\
    &\text{where } C_{Korn} \text{ is the Korn constant},\notag\\
    \left|G_3\right| & \leq   \sum_{i=1}^2 \| \nabla \mathcal{V}_{\mathcal{NN}_i}^{*} \|_{L^\infty(\Omega_i(t))}
    \int_{\Omega_i(t)} \rho_i |\be_i^{\bv}|^2 \mathrm{d}\bx_i
    \leq C
    \sum_{i=1}^2\int_{\Omega_i(t)} \rho_i |\be_i^{\bv}|^2 \mathrm{d}
    \bx_i,\label{G3}\\
\left|\sum_{k=4}^8 G_k\right| & \leq C \left[ \sum_{i=1}^2
\left(\int_{\Omega_i(t)} |\mathcal{R}_{i}^{\bv}|^2 \mathrm{d} \bx_i
+ \int_{\Omega_i(t)} |\mathcal{R}_{i}^{\nabla \cdot}|^2 \mathrm{d}
\bx_i + \int_{\partial \Omega_i(t) \backslash \Gamma(t)}
|\mathcal{R}_{i}^{\mathcal{B}}|^2 \mathrm{d}s_i\right) \right.\notag\\
&\left.\qquad+ \int_{\Gamma(t)} |\mathcal{R}_{\Gamma}^{\bv}|^2
\mathrm{d} s + \int_{\Gamma(t)}
|\mathcal{R}_{\Gamma}^{\bm{\sigma}}|^2 \mathrm{d} s
\right]^{\frac{1}{2}},
\end{align}
where $C$ denotes a generally positive constant depending on $\|
\bv_i \|_{H^1(\left(H^2(\Omega_i(t))\right)^d)}$, $\| p_i
\|_{L^\infty(H^1(\Omega_i(t)))}$,
$\rho_i$, $\mu_i$, $\Omega_i(t)\ (i=1,2)$ and the dimension $d$.

Finally, we estimate the term $G_1$ for the following two cases of
interface motion, respectively.

\textbf{Case 1: The prescribed interface motion.} In this case, the
moving boundary velocity, $\bw_1=\bw_2=\bw$, is known a priori on
$\Gamma(t)$, and $\bw_i=0$ on $\partial\Omega_i(t) \backslash
\Gamma(t)$, $i=1,2$, i.e., the external boundary of each fluid
subdomain is fixed. Therefore, we can estimate $G_1$ as follows,
\begin{align}
G_1 &= \int_{\Gamma(t)} \frac{\rho_1| \be_1^{\bv} |^2-\rho_2|
\be_2^{\bv} |^2}{2} \bw \cdot \mathbf{n}_1 \, \mathrm{d} s\notag\\
&= \int_{\Gamma(t)}
\left(\frac{\rho_1}{2}(\be_1^{\bv}+\be_2^{\bv})\cdot
(\be_1^{\bv}-\be_2^{\bv})+\frac{\rho_1-\rho_2}{2}|
\be_2^{\bv} |^2\right)\bw \cdot \mathbf{n}_1 \, \mathrm{d} s\notag\\
&= \int_{\Gamma(t)} \frac{\rho_1}{2}(\be_1^{\bv}+\be_2^{\bv})\cdot
\mathcal{R}_{\Gamma}^{\bv}\,\bw \cdot \mathbf{n}_1 \, \mathrm{d}
s+\int_{\Gamma(t)}\frac{\rho_1-\rho_2}{2}|
\be_2^{\bv} |^2\bw \cdot \mathbf{n}_1 \, \mathrm{d} s\notag\\
&\leq 
C\|\bw\|_{L^\infty(\Gamma(t))}\left(\sum_{i=1}^2
\|\be_i^{\bv}\|_{L^2(\Gamma(t))}\|\mathcal{R}_{\Gamma}^{\bv}\|_{L^2(\Gamma(t))}
+\|\be_2^{\bv} \|_{L^2(\Gamma(t))}^2\right)\notag\\
&\leq 
C\|\bw\|_{L^\infty(\Gamma(t))}C_{trace}\left(\sum_{i=1}^2
\|\be_i^{\bv}\|_{H^1(\Omega_i(t))}\|\mathcal{R}_{\Gamma}^{\bv}\|_{L^2(\Gamma(t))}\right.\notag\\
&\left.\quad
+
\left({\varepsilon_{\Gamma}}\|\be_2^{\bv}\|_{H^1(\Omega_2(t))}
+\frac{1}{{\varepsilon_{\Gamma}}}
\|\be_2^{\bv}\|_{L^2(\Omega_2(t))}\right)^2\right)\notag\\
&\leq 
C\|\bw\|_{L^\infty(\Gamma(t))}C_{trace}\left(\sum_{i=1}^2
\|\be_i^{\bv}\|_{H^1(\Omega_i(t))}\|\mathcal{R}_{\Gamma}^{\bv}\|_{L^2(\Gamma(t))}\right.\notag\\
&\left.\quad +\varepsilon^2_{\Gamma}
C^2_{Korn}\|\bD(\be_2^{\bv})\|^2_{L^2(\Omega_2(t))}
+\frac{1}{{\varepsilon}^2_{\Gamma}}\|\be_2^{\bv}\|^2_{L^2(\Omega_2(t))}\right),\notag\\
&\leq
C\left(\sum_{i=1}^2\|\mathcal{R}_{\Gamma}^{\bv}\|_{L^2(\Gamma(t))}
+\|\be_2^{\bv}\|^2_{L^2(\Omega_2(t))}\right)
+\varepsilon\|\bD(\be_2^{\bv})\|^2_{L^2(\Omega_2(t))},\label{G1}
\end{align}
where the trace theorem and the refined trace estimate \cite[Page
27]{Xu1989} are applied, $C_{trace}$ is the constant of trace
estimate, and the previous introduced generic constant $C>0$ depends
on
$\|\bw\|_{L^\infty(\Gamma(t))}$, 
$C_{trace}$ and $C_{Korn}$ as well. 

\textbf{Case 2: The solution-driven interface motion.} In this case,
we have $\bw_i = \mathcal{V}_{\mathcal{NN}_i}^{*}\,(i=1,2)$ on the
moving interface $\Gamma(t)$. Then, by applying the divergence
theorem to the term $G_1$, we obtain
\begin{align}
G_1&=\sum_{i=1}^2 \int_{\partial\Omega_i(t)} \rho_i \frac{|
\be_i^{\bv} |^2}{2}
\mathcal{V}_{\mathcal{NN}_i}^{*} \cdot \mathbf{n}_i \mathrm{d} s = \sum_{i=1}^2 \int_{\Omega_i(t)} \nabla \cdot \left( \rho_i \frac{| \be_i^{\bv} |^2}{2} \mathcal{V}_{\mathcal{NN}_i}^{*} \right) \mathrm{d} \bx \notag \\
&= \sum_{i=1}^2 \int_{\Omega_i(t)} \rho_i \mathcal{V}_{\mathcal{NN}_i}^{*} \cdot \nabla \left( \frac{| \be_i^{\bv} |^2}{2} \right) \mathrm{d} \bx + \sum_{i=1}^2 \int_{\Omega_i(t)} \rho_i \frac{| \be_i^{\bv} |^2}{2} \nabla \cdot \mathcal{V}_{\mathcal{NN}_i}^{*} \mathrm{d} \bx \notag \\
&= \sum_{i=1}^2 \int_{\Omega_i(t)} \rho_i
\left((\mathcal{V}_{\mathcal{NN}_i}^{*} \cdot \nabla)
\be_i^{\bv}\right) \cdot \be_i^{\bv} \mathrm{d} \bx + \sum_{i=1}^2
\int_{\Omega_i(t)} \rho_i \frac{| \be_i^{\bv} |^2}{2} \nabla \cdot
\mathcal{V}_{\mathcal{NN}_i}^{*} \mathrm{d} \bx\notag\\
&=H_1+H_2,\label{G1-1}
\end{align}
where similarly with the estimation of $G_2$, we have
\begin{align}
   H_1 &\leq \sum_{i=1}^2\left(\varepsilon \int_{\Omega_i(t)}
   \rho_i |(\mathcal{V}_{\mathcal{NN}_i}^{*} \cdot \nabla) \be_i^{\bv}|^2 \mathrm{d} \bx_i
   + 
   C\int_{\Omega_i(t)} \rho_i |\be_i^{\bv}|^2 \mathrm{d} \bx_i\right)  \notag\\
    & \leq \sum_{i=1}^2\left(\varepsilon C_{Korn} \rho_i \| \mathcal{V}_{\mathcal{NN}_i}^{*} \|^2_{L^\infty(\Omega_i(t))}
    \int_{\Omega_i(t)} |\bD(\be_i^{\bv})|^2 \mathrm{d} \bx_i
    + 
    C\int_{\Omega_i(t)} \rho_i |\be_i^{\bv}|^2 \mathrm{d} \bx_i
    \right),\label{H1}
\end{align}
and
\begin{align}
    H_2 & \leq \sum_{i=1}^2 \| \nabla\cdot \mathcal{V}_{\mathcal{NN}_i}^{*} \|_{L^\infty(\Omega_i(t))}
    \int_{\Omega_i(t)} \frac{\rho_i}{2} |\be_i^{\bv}|^2 \mathrm{d}\bx_i
    \leq C
    \sum_{i=1}^2\int_{\Omega_i(t)} \rho_i |\be_i^{\bv}|^2 \mathrm{d}
    \bx_i.\label{H2}
\end{align}

Then, combine (\ref{G2})-(\ref{H2}), and choose sufficiently small
$\varepsilon$, 
(\ref{estimate4twophase}) 
becomes:
\begin{align}
& \frac{\mathrm{d}}{\mathrm{d}t} \left[
\sum_{i=1}^2 \int_{\Omega_i(t)} 
|\be_i^{\bv} |^2
\mathrm{d} \bx \right]+\sum_{i=1}^2 \int_{\Omega_i(t)} |
\bD(\be_i^{\bv}) |^2 \mathrm{d} \bx_i \notag \\
    & \leq C \left[ \left(\sum_{i=1}^2 \left(\int_{\Omega_i(t)}
    |\mathcal{R}_{i}^{\bv}|^2 \mathrm{d} \bx  +
    \int_{\Omega_i(t)} |\mathcal{R}_{i}^{\nabla \cdot}|^2 \mathrm{d} \bx +
    \int_{\partial \Omega_i(t) \backslash \Gamma(t)}
    |\mathcal{R}_{i}^{\mathcal{B}}|^2 \mathrm{d}s \right) \right.\right.\notag\\
    & \left.\left.\qquad\quad+ \int_{\Gamma(t)}\left(|\mathcal{R}_{\Gamma}^{\bv}|^2
    +|\mathcal{R}_{\Gamma}^{\bm{\sigma}}|^2\right) \mathrm{d} s
    \right)^{\frac{1}{2}} + \sum_{i=1}^2 \int_{\Omega_i(t)} |
    \be_i^{\bv}|^2 \mathrm{d} \bx
    \right],\label{Error-BeforeGronwall}
\end{align}

Integrate (\ref{Error-BeforeGronwall}) over time for any $0\leq t
\leq \tau$, where $0<\tau\leq T$, yields
\begin{align*}
    & \sum_{i=1}^2 \int_{\Omega_i(\tau)} | \be_i^{\bv}(\bx_i,\tau) |^2 \mathrm{d} \bx
+\int_0^{\tau}\sum_{i=1}^2 \int_{\Omega_i(t)} |
\bD(\be_i^{\bv}(\bx_i,t)) |^2
\mathrm{d} \bx_i\mathrm{d} t\\
& \leq \sum_{i=1}^2 \int_{\Omega_i(0)} |
\mathcal{R}_{i}^{\mathcal{I}} |^2 \mathrm{d} \bx  \\
    & \quad+ C \int_0^\tau\left[ \left(\sum_{i=1}^2 \left(\int_{\Omega_i(t)}
    |\mathcal{R}_{i}^{\bv}|^2 \mathrm{d} \bx  +
    \int_{\Omega_i(t)} |\mathcal{R}_{i}^{\nabla \cdot}|^2 \mathrm{d} \bx +
    \int_{\partial \Omega_i(t) \backslash \Gamma(t)}
    |\mathcal{R}_{i}^{\mathcal{B}}|^2 \mathrm{d}s \right) \right.\right.\notag\\
    & \left.\left.\qquad\quad+ \int_{\Gamma(t)}\left(|\mathcal{R}_{\Gamma}^{\bv}|^2
    +|\mathcal{R}_{\Gamma}^{\bm{\sigma}}|^2\right) \mathrm{d} s
    \right)^{\frac{1}{2}} + \sum_{i=1}^2 \int_{\Omega_i(t)} |
    \be_i^{\bv}|^2 \mathrm{d} \bx
    \right]\mathrm{d} t
\end{align*}

Applying the Gr\"{o}nwall's inequality, we obtain
\begin{align*}
    & \sum_{i=1}^2 \int_{\Omega_i(\tau)} | \be_i^{\bv}(\bx_i,\tau) |^2 \mathrm{d} \bx
+\int_0^{\tau}\sum_{i=1}^2 \int_{\Omega_i(t)} |
\bD(\be_i^{\bv}(\bx_i,t)) |^2
\mathrm{d} \bx_i\mathrm{d} t\\
& \leq \sum_{i=1}^2 \int_{\Omega_i(0)} |
\mathcal{R}_{i}^{\mathcal{I}} |^2 \mathrm{d} \bx  \\
    & \quad+ C \int_0^T \left(\sum_{i=1}^2 \left(\int_{\Omega_i(t)}
    |\mathcal{R}_{i}^{\bv}|^2 \mathrm{d} \bx  +
    \int_{\Omega_i(t)} |\mathcal{R}_{i}^{\nabla \cdot}|^2 \mathrm{d} \bx +
    \int_{\partial \Omega_i(t) \backslash \Gamma(t)}
    |\mathcal{R}_{i}^{\mathcal{B}}|^2 \mathrm{d}s \right) \right.\notag\\
    & \left.\qquad\quad+ \int_{\Gamma(t)}\left(|\mathcal{R}_{\Gamma}^{\bv}|^2
    +|\mathcal{R}_{\Gamma}^{\bm{\sigma}}|^2\right) \mathrm{d} s
    \right)^{\frac{1}{2}}\mathrm{d} t
\end{align*}
Integrate again in time over $0\leq\tau\leq T$, leads to
\begin{eqnarray} \label{ine:two-phase-flow-pde-assumption}
&&    
\sum_{i=1}^2 \| \be_i^{\bv}\|^2_{L^2(L^2(\Omega_i(t)))}
 \leq C\bigg[\sum_{i=1}^2 \int_{\Omega_i(0)} |
\mathcal{R}_{i}^{\mathcal{I}} |^2 \mathrm{d} \bx  +\notag\\
    && \quad \int_0^T \left(\sum_{i=1}^2 \left(\int_{\Omega_i(t)}
    |\mathcal{R}_{i}^{\bv}|^2 \mathrm{d} \bx  +
    \int_{\Omega_i(t)} |\mathcal{R}_{i}^{\nabla \cdot}|^2 \mathrm{d} \bx +
    \int_{\partial \Omega_i(t) \backslash \Gamma(t)}
    |\mathcal{R}_{i}^{\mathcal{B}}|^2 \mathrm{d}s \right)+ \right.\notag\\
    && \left.\quad \int_{\Gamma(t)}\left(|\mathcal{R}_{\Gamma}^{\bv}|^2
    +|\mathcal{R}_{\Gamma}^{\bm{\sigma}}|^2\right) \mathrm{d} s
    \right)^{\frac{1}{2}}\mathrm{d} t\bigg]\label{doubleint_time0}\\
&& \leq C\bigg[\sum_{i=1}^2\left(\|\mathcal{R}_{i}^{\mathcal{I}}
\|^2_{L^2(\Omega_i(0))}+
 \|\mathcal{R}_{i}^{\bv}\|_{L^2(L^2(\Omega_i(t)))} +
 \|\mathcal{R}_{i}^{\nabla
\cdot}\|_{L^2(L^2(\Omega_i(t)))}+\right. \notag\\
&& \left. \quad \|\mathcal{R}_{i}^{\mathcal{B}}\|_{L^2(L^2(\partial
\Omega_i(t) \backslash \Gamma(t)))} \right)+
\|\mathcal{R}_{\Gamma}^{\bv}\|_{L^2(L^2(\Gamma(t)))}
    +\|\mathcal{R}_{\Gamma}^{\bm{\sigma}}\|_{L^2(L^2(\Gamma(t)))}\bigg].\label{doubleint_time}
\end{eqnarray}

To estimate the terms on the right-hand side of
(\ref{doubleint_time0}), we employ Assumption~\ref{assumption:quad},
resulting in
\begin{align*}
\int_0^T \left(\int_{\Omega_i(t)}\left(
    |\mathcal{R}_{i}^{\bv}|^2+|\mathcal{R}_{i}^{\nabla \cdot}|^2\right)
    \mathrm{d} \bx\right)^{\frac{1}{2}}\mathrm{d}t&=
\|\mathcal{L}_i(\mathcal{U}_{\mathcal{NN}_i}^{\ast})
\|_{L^2(0,T;L^2(\Omega_i(t)))} \\
&= \left( \int_{\Omega_i(t) \times [0,T]}
|\mathcal{L}_i(\mathcal{U}_{\mathcal{NN}_i}^{\ast}(\bm{X}_i;
\bm{\Theta}_i^*)|^2 \, \mathrm{d} \bm{X}_i
\right)^{\frac{1}{2}} \\
&\leq \left( \mathcal{F}_{\mathcal{L}_i}(\bm{\Theta}_i^*)
\right)^{\frac{1}{2}} + C_{quad}^{\mathcal{L}_i}
M_{\mathcal{L}_i}^{-\frac{\alpha_{\mathcal{L}_i}}{2}}.
\end{align*}
Apply similar estimates to the remaining terms on the right-hand
side of (\ref{doubleint_time0}), yields
\begin{align*} %
    & \sum_{i=1}^2 \| \bm{v}_i - \mathcal{V}_{\mathcal{NN}_i}^{\ast} \|_{
    {L^2(0,T;L^2(\Omega_i(t)))}}^2  \notag\\
    &\leq C\bigg[\sum_{i=1}^2\bigg(\left( \mathcal{F}_{\mathcal{L}_i}(\bm{\Theta}_i^*) \right)^{\frac{1}{2}}
    + \left( \mathcal{F}_{\mathcal{B}_i}(\bm{\Theta}_i^*) \right)^{\frac{1}{2}}
    + \mathcal{F}_{\mathcal{I}_i}(\bm{\Theta}_i^*)\bigg)+
    \left(\mathcal{F}_{\Gamma}(\bm{\Theta}_1^*,\bm{\Theta}_2^*) \right)^{\frac{1}{2}}+\notag\\
    & \qquad\ \
      \sum_{i=1}^2\bigg( C_{quad}^{\mathcal{L}_i} M_{\mathcal{L}_i}^{-\frac{\alpha_{\mathcal{L}_i}}{2}}
     + C_{quad}^{\mathcal{B}_i} M_{\mathcal{B}_i}^{-\frac{\alpha_{\mathcal{B}_i}}{2}}
     + C_{quad}^{\mathcal{I}_i} M_{\mathcal{I}_i}^{-\alpha_{\mathcal{I}_i}}\bigg)
    + C_{quad}^{\Gamma} M_{\Gamma}^{-\frac{\alpha_{\Gamma}}{2}}
    \bigg].
\end{align*}
The desired error estimation (\ref{theorem_estimate}) is then
obtained.
\end{proof}

Theorem \ref{thm:error-two-phase-fluid-prescribed} provides both a
priori and a posteriori error bounds which depend on the relative
magnitude between the optimization error (i.e., the first four terms
on the right hand side of (\ref{theorem_estimate}) and the
quadrature error (i.e., the last four terms on the right hand side
of (\ref{theorem_estimate}). When the optimization problem is solved
accurately, the optimization error terms becomes negligible in
comparison with the quadrature error terms, producing a priori
convergence rates with respect to the number of sampling points. In
this scenario, the a priori numerical error is also called the
generalization error (gen-error). On the other hand, if the
optimization error terms dominate, then the error estimation serves
as a posteriori error indicator for adaptive refinement strategies
based upon the numerical solution's behavior.

\section{Numerical experiments} \label{sec:numerics}
To demonstrate the effectiveness and capacity of the proposed
PINNs/meshfree method and validate the theoretical convergence
results obtained in Section \ref{sec:error}, in this section we
conduct numerical experiments through three examples of two-phase
flow interface problem with various moving interface configurations
and physical parameters. The developed PINNs approach for all
numerical examples are implemented using
PyTorch~\cite{NEURIPS2019_9015} and are executed on a computer
equipped with Intel 5th generation CPU and Tesla A40 GPU. The fully
connected DNN structures used in our numerical experiments, unless
otherwise specified, consist of $3$ hidden layers with $50$ neurons
in each hidden layer, and employ the $\tanh$ activation function on
each layer.

In addition, the training process of the developed PINNs approach
for two-phase flow moving interface problems is conducted in two
phases: a pretraining phase followed by a main training phase.
During the pretraining phase, all weight coefficients
$\omega_{\mathcal{L}_i}$, $\omega_{\Gamma}$,
$\omega_{\mathcal{B}_i}$, and $\omega_{\mathcal{I}_i}$ are set to
$1$, with a fixed learning rate of $0.001$ for the first $20,000$
epochs. In the subsequent main training phase within the next
$80,000$ epochs, the weight coefficients are adjusted such that
$\omega_{\Gamma} = \omega_{\mathcal{B}_i} = 10$, except Example 3 in
which an adaptive strategy is employed to configure all weight
coefficients as well as the learning rate in the main training
phase. In addition, the ADAM optimizer is employed throughout both
training phases, entirely.

\subsection{Example 1: The case of prescribed moving interface without jump coefficients}\label{num:example1}
We first consider a two-phase flow interface problem with an moving
interface undergoing the following prescribed deforming, translating
and rotating motion in the $(2+1)$-th dimensional spatiotemporal
space $\mathbb{R}^2\times[0,T]$:
\begin{equation}\label{elliptic-move}
\begin{array}{rcl}
x(\theta,t) &=& x_0+v_xt + a(t)\cos\theta\cos(\omega t)-b(t)\sin\theta\sin(\omega t), \\
y(\theta,t) &=& y_0+v_yt + a(t)\cos\theta\sin(\omega t)+b(t)\sin\theta\cos(\omega t), \\
z(\theta,t) &=& t, \quad \text{for } 0\leq\theta\leq2\pi,\ 0\leq
t\leq T,
\end{array}
\end{equation}
which actually forms a three-dimensional, inclined, generalized
helicoid with a deforming and translating elliptical profile curve
whose semi-major axis, $a(t)$, and semi-minor axis, $b(t)$, vary in
time, where $(x_0,y_0)$ is the initial ellipse center,
$\textbf{v}=(v_x,v_y)^\top$ is the translational velocity of the
elliptical profile curve in $xy$-plane, $\omega$ is the rotational
angular velocity of the helicoid, and, (\ref{elliptic-move})$_3$
indicates that the translational velocity of the helicoid along
$z$-axis (i.e., the time axis) is just $1$.
\begin{figure}[hbt]
\centering
\includegraphics[width=5cm, height=4cm]{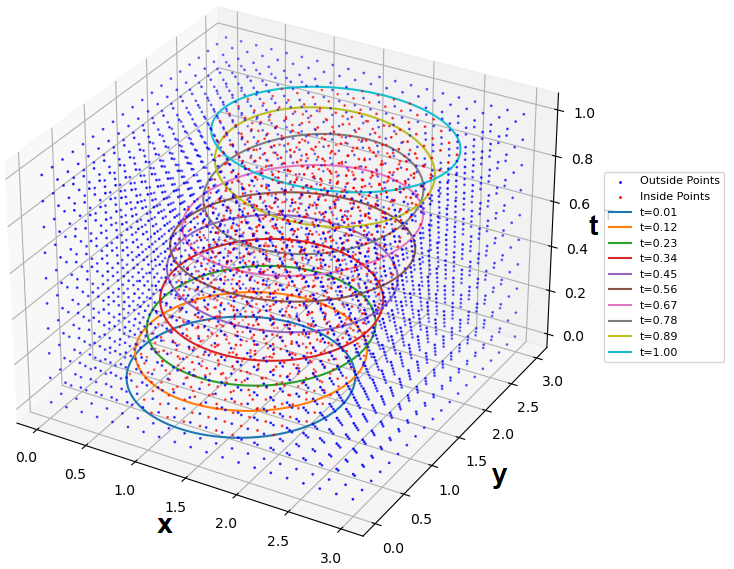}
\caption{The training set of Example 1 separated by a prescribed
generalized helicoid with a deforming and translating elliptical
profile curve. The elliptical interfaces that are projections of the
helicoid on $xy$-plane at different times are plotted with different
colors, while their interiors are uniformly covered by red dashed
line segments at different angles $2\pi t$ along the time axis.}
 \label{fig:elliptic-points}
\end{figure}

In this example, we choose $x_0=y_0=1.2,\ v_x=v_y=0.6,\
\omega=2\pi,\ a(t)=1+0.1t,\ b(t)=1-0.1t$, $\Omega := [0,3] \times
[0,3]\subset \mathbb{R}^2$ and set $T=1$ as the terminal time. Then
at each time $t\in[0,1]$, by projecting the above described helicoid
onto the $xy$-plane $z=t$, we have $\Omega_2(t) := \big\{(x,y)\in
\Omega \ \big|$ $\left(\frac{x-1.2-0.6t}{1 + 0.1t}\right)^2$ $+
\left(\frac{y-1.2-0.6t}{1 - 0.1t}\right)^2 < 1 \big\}$,
$\Omega_1(t)=\Omega\backslash\overline{\Omega_2(t)}$, and
$\Gamma(t):=\big\{(x,y) \in
\partial\Omega_1(t)\cap\partial\Omega_2(t) \ \big|$
$\left(\frac{x-1.2-0.6t}{1+0.1t}\right)^2+\left(\frac{y-1.2-0.6t}{1
- 0.1t}\right)^2 = 1 \big\}$. 
However, from a holistic three-dimensional spatiotemporal
perspective, the computational domain of this example are the
exterior and interior subdomains separated by the helicoidal
interface defined in (\ref{elliptic-move}). Therefore, as
illustrated in Figure \ref{fig:elliptic-points}, the training set to
be employed to implement the developed PINNs/meshfree method for
this example is separately sampled inside two $(2+1)$-th dimensional
spatiotemporal subdomains, on their boundaries, initial domains and
the helicoidal interface between them that is a generalized helicoid
with a deforming and translating elliptical profile curve, based
upon the definition of interface motion (\ref{elliptic-move}).

Next, we set the physical parameters $\rho_1=\rho_2 = 1$ and $\mu_1
= \mu_2 = 1$, and appropriately choose $\bm{f}_i,\ \bg_i,\ \bv_i^b,\
\bv_i^0$, $i=1,2$, in (\ref{eqn:interface-model}) to make sure the
following functions $(\bm{v}_i,\ p_i)$, $i=1,2$, are the exact
solution to this example:
\begin{align}
&\bm{v}_1 =
\begin{pmatrix}
e^t \sin x  \cos y \\
-e^t \cos x  \sin y
\end{pmatrix}, \ \quad\quad
p_1 =   e^t  \sin x  \sin y, \notag\\
&\bm{v}_2 =
\begin{pmatrix}
    \cos t  \cos x  \cos y \\
    \cos t  \sin x  \sin y
\end{pmatrix}, \quad
p_2 =   \cos t  \cos(x+y),\label{realsolution1}
\end{align}
which induces $\bg_i\neq 0,\ i=1,2$, leading to the case of
jump-type interface conditions across $\Gamma(t)$. We address that
uniform physical parameters are chosen in this example to
investigate the fundamental performance of the developed
PINNs/meshfree method for a two-phase flow moving interface problem
without jump coefficients.

In addition, to investigate the convergence behavior of the
developed PINNs approach in terms of the generalization error, we
increasingly choose $M_{\mathcal{L}_i}$, $M_{\mathcal{B}_i}$,
$M_{\Gamma}$ and $M_{\mathcal{I}_i}$ as number of sampling points
inside two $(2+1)$-th dimensional spatiotemporal subdomains, on
their boundaries, the helicoidal interface and initial domains,
respectively, as displayed in Table~\ref{tab:NSNS-elliptic-1vs1}.
Two types of numerical errors are included in this table: the
relative generalization error in $L^2$ norm and the loss error in
$l^2$ norm. We can see from the last two columns of
Table~\ref{tab:NSNS-elliptic-1vs1} that errors of the loss function
are much smaller than the generalization errors, meaning that we are
in a regime where the optimization problem is solved fairly
accurately while the quadrature error dominates the numerical
approximation of PINNs approach. In this case, our error estimation
derived in Section~\ref{sec:error} mainly serves as a prior error
estimation. Furthermore, we intend to show that the quadrature
errors coming from loss terms of interface conditions, boundary
conditions and initial conditions play an important role as well,
besides the quadrature errors arising from loss terms of PDEs in the
interior spatiotemporal domains, noting that dimensions of the
interface, the boundary, and the initial subdomains are lower than
that of the interior spatiotemporal subdomains. In our experiments,
as shown in Table~\ref{tab:NSNS-elliptic-1vs1}, when we keep the
number of interior sampling point $M_{\mathcal{L}_i}$ as a constant
while increasing the number of sampling points on the interface,
$M_{\Gamma}$, on the boundaries, $M_{\mathcal{B}_i}$, and in the
initial subdomains, $M_{\mathcal{I}_i}$, $i=1,2$, we can observe
that the overall generalization errors decrease as expected,
although only a very small number of them stagnate or even increase,
which may indicate that the quadrature errors from the interior
domain terms take over the dominance when the number of sampling
points on the interface/boundaries and in the initial subdomains are
more than enough. On the other hand,
Table~\ref{tab:NSNS-elliptic-1vs1} exhibits that, with the same
setup of $M_{\mathcal{B}_i}$, $M_{\Gamma}$ and $M_{\mathcal{I}_i}$,
the larger $M_{\mathcal{L}_i}$ always produces the smaller
generalization error, which tells us that the quadrature errors
generated from the interior spatiotemporal subdomains play the
dominant role, in general. Overall, those observations are
consistent with our theoretical result derived in Section
\ref{sec:error}.
\begin{table}[H]
\centering \caption{Loss error and generalization error (gen-error)
vs \# of sampling points in Example 1}
\label{tab:NSNS-elliptic-1vs1}
\begin{tabular}{ c c c c c c}
\hline \hline $M_{\mathcal{L}_i}$ & $M_{\mathcal{B}_i}$ &
$M_{\Gamma}$&  $M_{\mathcal{I}_i}$  &  Gen-Error & Loss Error
\\ \hline
$10 \times 10 \times 5$ & $4 \times 4 \times 5$ & $4 \times 5$  & $4 \times 4$  & 7.32e-02  & 3.85e-05 \\
$10 \times 10 \times 5$ & $8 \times 4 \times 5$ & $8 \times 5$  & $8 \times 8$  & 6.51e-02  & 3.41e-05 \\
$10 \times 10 \times 5$ & $16 \times 4 \times 5$ & $16 \times 5$  & $16 \times 16$  & 5.28e-02  & 2.93e-05 \\
$10 \times 10 \times 5$ & $32 \times 4 \times 5$ & $32 \times 5$  &
$32 \times 32$  & 4.92e-02  & 3.12e-05 \\ \hline
$20 \times 20 \times 10$ & $4 \times 4 \times 10$ & $4 \times 10$  & $4 \times 4$  & 5.45e-02  & 2.76e-05 \\
$20 \times 20 \times 10$ & $8 \times 4 \times 10$ & $8 \times 10$  & $8 \times 8$  & 4.10e-02  & 2.32e-05 \\
$20 \times 20 \times 10$ & $16 \times 4 \times 10$ & $16 \times 10$  & $16 \times 16$  & 4.25e-02  & 2.17e-05 \\
$20 \times 20 \times 10$ & $32 \times 4 \times 10$ & $32 \times 10$
& $32 \times 32$  & 3.48e-02  & 1.86e-05 \\ \hline
$40 \times 40 \times 20$ & $4 \times 4 \times 20$ & $4 \times 20$  & $4 \times 4$  & 3.57e-02  & 1.92e-05 \\
$40 \times 40 \times 20$ & $8 \times 4 \times 20$ & $8 \times 20$  & $8 \times 8$  & 2.83e-02  & 1.65e-05 \\
$40 \times 40 \times 20$ & $16 \times 4 \times 20$ & $16 \times 20$  & $16 \times 16$  & 2.41e-02  & 1.71e-05 \\
$40 \times 40 \times 20$ & $32 \times 4 \times 20$ & $32 \times 20$  & $32 \times 32$  & 1.98e-02  & 1.43e-05 \\
\hline \hline
\end{tabular}
\end{table}




Figure \ref{fig:elliptic-pinn-10} illustrates numerical results of
fluid velocity and pressure predicted by the developed PINNs
approach on the $xy$-plane $z=1$ (i.e., at the terminal time) of the
outer and inner subdomains, respectively. In the meanwhile, error
distributions between the PINNs output and the exact solution of
fluid velocity and pressure are shown in Figure
\ref{fig:elliptic-error-10} at the same plane as well, which tells
that the largest errors mainly exist near the moving elliptical
interface, explaining why we set the weight coefficient of the loss
function for interface conditions, $\omega_{\Gamma}$, is higher than
others.
\begin{figure}[H]
    \centering
\includegraphics[width=12cm, height=6cm]{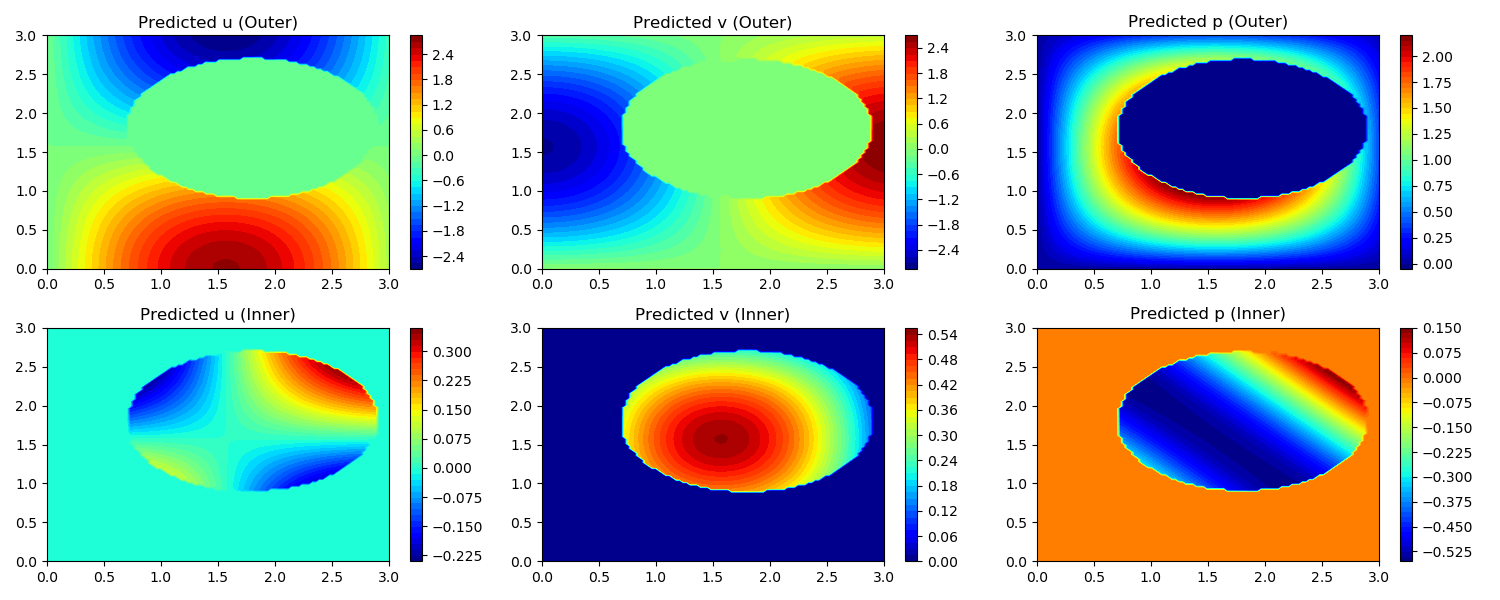}
\caption{PINNs outputs of Example 1 on the terminal planes of the
outer and inner subdomains, respectively. \textbf{Left}: The
horizontal velocity $u$; \textbf{Middle}: the vertical velocity $v$;
\textbf{Right}: the pressure $p$; \textbf{Upper}: outside the
elliptical interface; \textbf{Lower}: inside the elliptical
interface.} \label{fig:elliptic-pinn-10}
\end{figure}
\begin{figure}[H]
    \centering
\includegraphics[width=12cm, height=6cm]{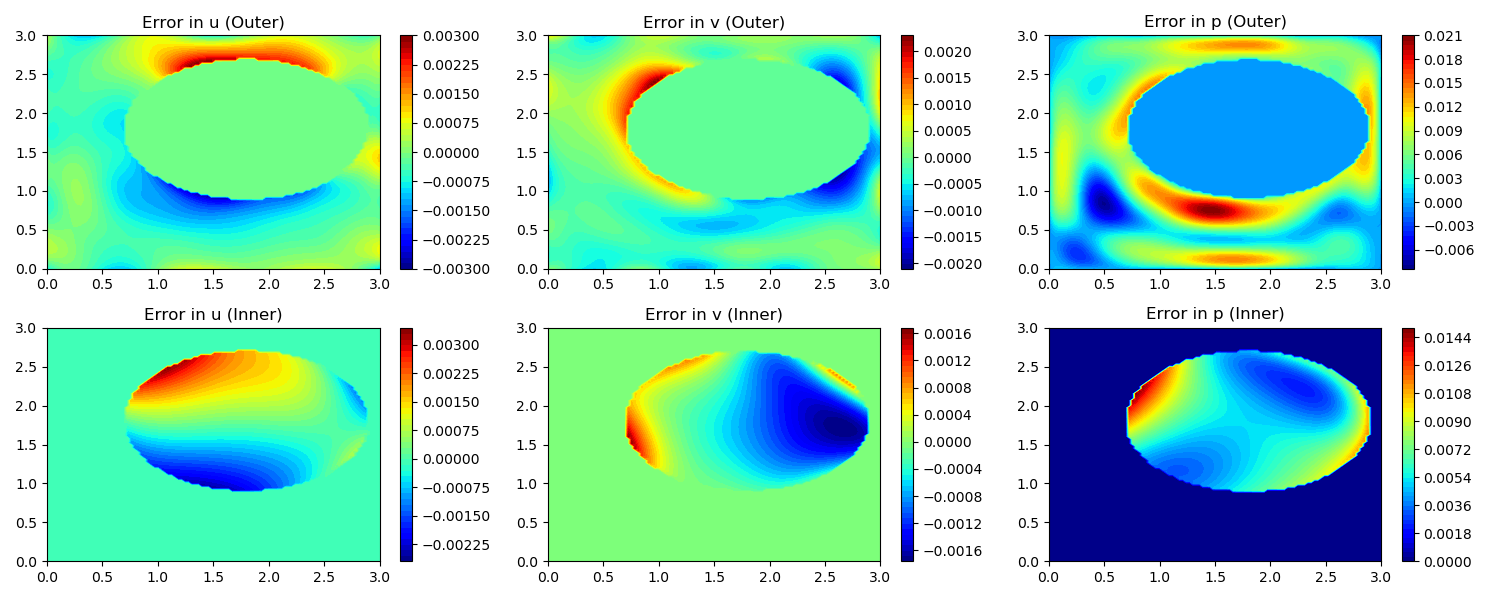}
\caption{Error distributions between PINNs output and exact solution
of Example 1 on the terminal planes of the outer and inner
subdomains, respectively. \textbf{Left}: The horizontal velocity
$u$; \textbf{Middle}: the vertical velocity $v$; \textbf{Right}: the
pressure $p$; \textbf{Upper}: outside the elliptical interface;
\textbf{Lower}: inside the elliptical interface.}
\label{fig:elliptic-error-10}
\end{figure}

\subsection{Example 2: The case of prescribed moving interface with jump coefficients}\label{num:example2}
Now we consider another example of the two-phase flow interface
problem with a prescribed deforming, translating and rotating
interface motion in time, in the meanwhile, the density and
viscosity of two-phase flow are no longer identical but jumping
across the moving interface. Precisely, the interface motion is
prescribed below in the $(2+1)$-th dimensional spatiotemporal space
$\mathbb{R}^2\times[0,T]$:
\begin{equation}\label{5star-move}
\begin{array}{rcl}
x(\theta,t) &=& x_0+v_xt + r(\theta,t)\cos\theta\cos(\omega t)-r(\theta,t)\sin\theta\sin(\omega t), \\
y(\theta,t) &=& y_0+v_yt + r(\theta,t)\cos\theta\sin(\omega t)+r(\theta,t)\sin\theta\cos(\omega t), \\
z(\theta,t) &=& t,\text{ where }r(\theta,t) = 1 -
0.3t\cos(5\theta),\, \theta\in[0,2\pi],\, t\in[0, T].
\end{array}
\end{equation}
Essentially, (\ref{5star-move}) forms an inclined generalized
helicoid with a deforming and translating five-fold-modulation
cyclic-harmonic profile curve, which is traced out in the $(2+1)$-th
dimensional spatiotemporal space. The bottom of such a helicoid
(i.e., at $t=0$) is a circle with the initial center $(x_0,y_0)$ and
the radius $1$. As time evolves, the definition of polar function
$r(\theta,t)$ in (\ref{5star-move}) produces a cyclic-harmonic curve
with a five-fold modulation in the $xy$-plane $z=t$ that deforms and
translates along time with the translational velocity
$\textbf{v}=(v_x,v_y)^\top$, while rotating about the z-axis (i.e.,
the time axis) with the angular velocity $\omega$ as well as
translating along the z-axis with the velocity $1$. Eventually, the
aforementioned helicoid is formed. In this example, we still choose
$x_0=y_0=1.2,\ v_x=v_y=0.6,\ \omega=2\pi,\ T=1$, and pick $\Omega :=
[0,3.5] \times [0,3.5]$ this time.

As shown in Figure \ref{fig:5star-points}, the training/sampling
points are separately sampled inside two $(2+1)$-th dimensional
spatiotemporal subdomains, on their boundaries, initial domains and
the helicoidal interface between them that is a generalized helicoid
with a deforming and translating five-fold-modulation
cyclic-harmonic profile curve along time, based upon the definition
of interface motion (\ref{5star-move}).
\begin{figure}[hbt]
\centering
\includegraphics[width=5cm, height=4cm]{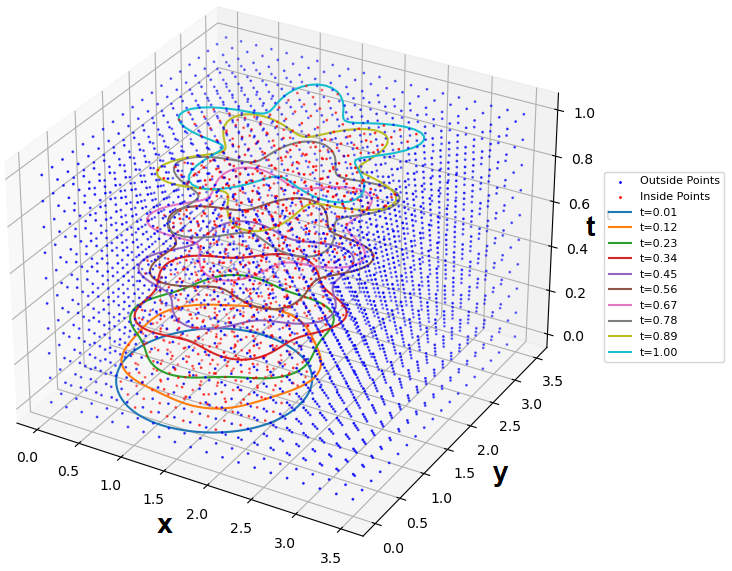}
\caption{The training set of Example 2 separated by a prescribed
generalized helicoid with a deforming and translating
five-fold-modulation cyclic-harmonic profile curve. The
cyclic-harmonic curves with a five-fold modulation, which are
projections of the helicoid onto $xy$-planes at different times, are
plotted with different colors, while their interiors are uniformly
covered by red dashed line segments at different angles $2\pi t$
along the time axis.}
 \label{fig:5star-points}
\end{figure}

By appropriately choosing $\bm{f}_i,\ \bg_i,\ \bv_i^b,\ \bv_i^0$,
$i=1,2$, in (\ref{eqn:interface-model}), we select the same exact
solution as Example 1 for this example, however with high-contrast
jumping coefficients $\rho_1=1$, $\rho_2 = 1000$, $\mu_1 = 1$ and
$\mu_2 = 1000$. Then, we apply the developed PINNs/meshfree method
to this example in terms of the same incremental sampling number
setting and training strategy as in Example 1. The obtain numerical
errors are illustrated in
Table~\ref{tab:NSNS-5star-high-contrast-no-obs} and
Figure~
\ref{fig:high-contrast-5star-no-obs-error-10}, where we find out
that numerical errors deteriorate significantly this time, loss
errors and generalization errors are as high as the order of
$10^{-2}$ and $10^{-1}$, respectively, in comparison with $10^{-5}$
and $10^{-2}$ for the case of no-jump-coefficient in Example 1. In
addition, the largest errors are all concentrated near the
interface, especially the fluid pressure, with generalization errors
on the order of $10^{-1}$. This significantly suppresses the error
order of fluid velocity, $10^{-2}$, therefore dominating total
generalization errors around the moving interface. This numerical
challenge is precisely because the physical coefficients change so
drastically at the moving interface that the interface condition
(\ref{eqn:interface-model})$_4$ is poorly trained, resulting in a
poor approximation of the fluid pressure.\vspace{-.5cm}
\begin{table}[H]
\centering \caption{Loss error and generalization error vs \# of
sampling points in Example 2 without the aid of observation points}
\label{tab:NSNS-5star-high-contrast-no-obs}
\begin{tabular}{ c c c c c c}
\hline \hline $M_{\mathcal{L}_i}$ & $M_{\mathcal{B}_i}$ &
$M_{\Gamma}$&  $M_{\mathcal{I}_i}$  &  Gen-Error & Loss Error
\\ \hline
{$10 \times 10 \times 5$} & {$4 \times 4 \times 5$} & {$4 \times 5$}  & {$4 \times 4$}  & {6.82e-01}  & {1.08e-02} \\
{$10 \times 10 \times 5$} & {$8 \times 4 \times 5$} & {$8 \times 5$}  & {$8 \times 8$}  & {5.31e-01}  & {9.42e-03} \\
{$10 \times 10 \times 5$} & {$16 \times 4 \times 5$} & {$16 \times 5$}  & {$16 \times 16$}  & {7.15e-01}  & {1.12e-02} \\
{$10 \times 10 \times 5$} & {$32 \times 4 \times 5$} & {$32 \times
5$}  & {$32 \times 32$} & {4.67e-01} & {8.89e-03} \\
\hline
{$20 \times 20 \times 10$} & {$4 \times 4 \times 10$} & {$4 \times 10$}  & {$4 \times 4$}  & {5.23e-01}  & {9.27e-03} \\
{$20 \times 20 \times 10$} & {$8 \times 4 \times 10$} & {$8 \times 10$}  & {$8 \times 8$}  & {5.56e-01}  & {1.02e-02} \\
{$20 \times 20 \times 10$} & {$16 \times 4 \times 10$} & {$16 \times 10$}  & {$16 \times 16$}  & {5.01e-01}  & {9.03e-03} \\
{$20 \times 20 \times 10$} & {$32 \times 4 \times 10$} & {$32 \times
10$}  & {$32 \times
32$}  & {4.78e-01}  & {8.76e-03} \\
\hline
{$40 \times 40 \times 20$} & {$4 \times 4 \times 20$} & {$4 \times 20$}  & {$4 \times 4$}  & {3.56e-01}  & {7.89e-03} \\
{$40 \times 40 \times 20$} & {$8 \times 4 \times 20$} & {$8 \times 20$}  & {$8 \times 8$}  & {2.91e-01}  & {7.43e-03} \\
{$40 \times 40 \times 20$} & {$16 \times 4 \times 20$} & {$16 \times 20$}  & {$16 \times 16$}  & {2.45e-01}  & {7.16e-03} \\
{$40 \times 40 \times 20$} & {$32 \times 4 \times 20$} & {$32 \times 20$}  & {$32 \times 32$}  & {2.67e-01}  & {7.29e-03} \\
\hline \hline
\end{tabular}
\end{table}
\begin{figure}[H]
    \centering
\includegraphics[width=12cm, height=6cm]{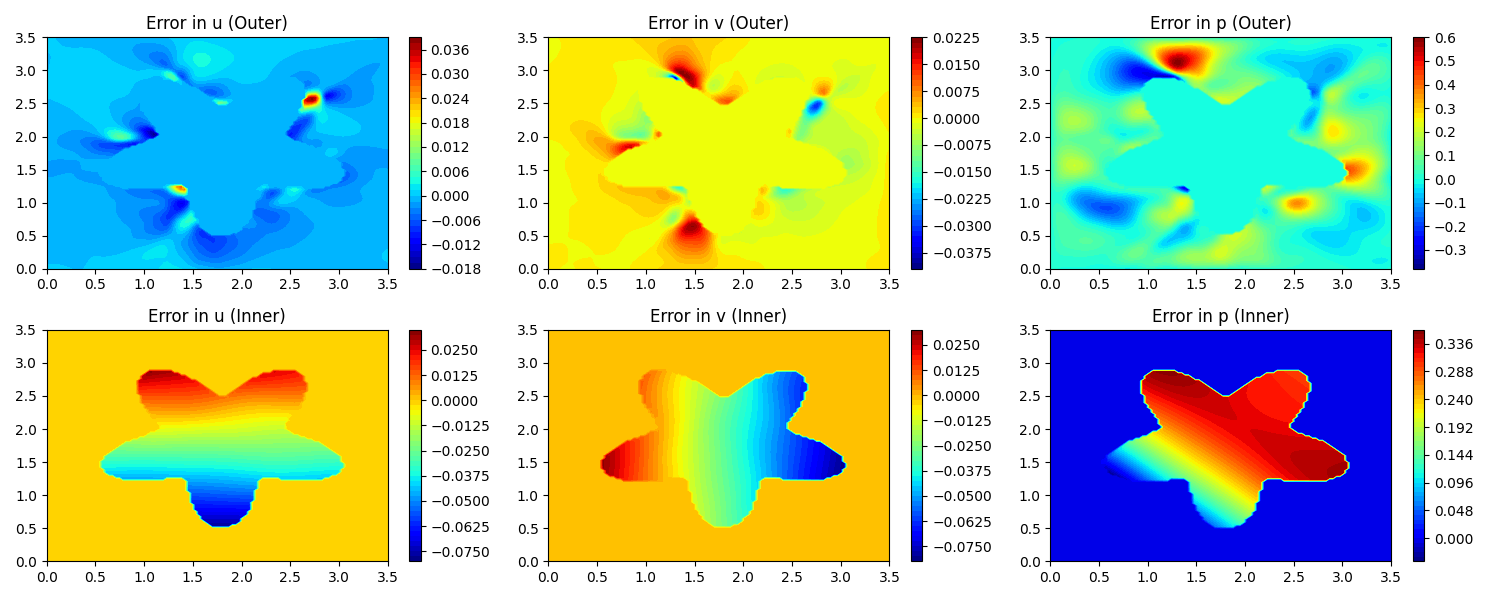}
\caption{Without the aid of observation points, error distributions
between PINNs output and exact solution of Example 2 on the terminal
planes of the outer and inner subdomains, respectively.
\textbf{Left}: The horizontal velocity $u$; \textbf{Middle}: the
vertical velocity $v$; \textbf{Right}: the pressure $p$;
\textbf{Upper}: outside the five-fold-modulation cyclic-harmonic
interface; \textbf{Lower}: inside the five-fold-modulation
cyclic-harmonic interface.}
\label{fig:high-contrast-5star-no-obs-error-10}
\end{figure}

Similar numerical phenomena have been elucidated in
\cite{ZhuHuSun2023}, where one solution method to overcome the
problem of insufficient numerical precision induced by jumping
coefficients is to add some observation data points for the fluid
pressure near the interface to improve the PINNs prediction's
accuracy on the pressure across the interface, and consequently on
the velocity as well.
Therefore, to improve the numerical accuracy for this example, we
introduce a certain number of observation points on the moving
interface by assigning the exact solution of fluid pressure as the
known data to each observation point. In particular, we pick 5
observation points at the five-fold-modulation cyclic-harmonic
profile curve which is the projection of the helicoid onto
$xy$-plane $z=t$, with angles $\theta \in \{0, 2\pi/5, 4\pi/5,
6\pi/5, 8\pi/5\}$ and values of time $t\in\{0.01, 0.12, 0.23, 0.34,
0.45, 0.56, 0.67, 0.78, 0.89, 1.00\}$. Therefore, 50 observation
points are taken with the coordinates $(x,y) = (1.2+0.6t + (1 -
0.3t\cos(5\theta))\cos\theta$, $1.2+0.6t + (1 -
0.3t\cos(5\theta))\sin\theta)$, which all locate on the moving
interface $\Gamma(t)$, exactly.

Then, we introduce the following sub-loss function based on these
observation data points,
$$\mathcal{F}_{\mathcal{D}}(\bm{\Theta}_2) :=
\frac{1}{M_{\mathcal{D}}} \sum_{k=1}^{M_{\mathcal{D}}} \left|
p_2(\mathbf{X}_{2,k}) -
\mathcal{P}_{\mathcal{NN}_2}(\mathbf{X}_{2,k}; \bm{\Theta}_2)
\right|^2,$$ and add it to the total loss functional,
$\mathcal{F}(\bm{\Theta}_1, \bm{\Theta}_2)$, with the corresponding
weight coefficient $\omega_{\mathcal{D}}$, defined as
\begin{align}
&\mathcal{F}(\bm{\Theta}_1, \bm{\Theta}_2)=\sum\limits_{i=1}^2
\left(\omega_{\mathcal{I}_i}
\mathcal{F}_{\mathcal{I}_i}(\bm{\Theta}_i) + \omega_{\mathcal{L}_i}
\mathcal{F}_{\mathcal{L}_i}(\bm{\Theta}_i) + \omega_{\mathcal{B}_i}
\mathcal{F}_{\mathcal{B}_i}(\bm{\Theta}_i)\right)+\omega_{\Gamma}
\mathcal{F}_{\Gamma}(\bm{\Theta}_1, \bm{\Theta}_2)\notag\\
&\qquad\qquad\qquad+\omega_{\mathcal{D}}
\mathcal{F}_{\mathcal{D}}(\bm{\Theta}_2),\label{newtotalloss}
\end{align}
where $M_{\mathcal{D}}=50,\ \omega_{\mathcal{D}}=1$ in this example.

By virtue of the above observation data points and the new loss
functional (\ref{newtotalloss}), we reapply the developed
PINNs/meshfree method to this example and attain new numerical
results as shown in Table~\ref{tab:NSNS-5star-high-contrast-obs} and
Figures \ref{fig:high-contrast-5star-pinn-10},
\ref{fig:high-contrast-5star-error-10}, where we can see that both
the loss errors and the generalization errors drop substantially
down to the order of $10^{-5}$ and $10^{-2}$ again, respectively. In
specific, we observe that as the number of sampling points increase,
numerical errors exhibit the same decreasing trend as shown in Table
\ref{tab:NSNS-elliptic-1vs1}, i.e., with the same setup of
$M_{\mathcal{L}_i}$, the larger $M_{\mathcal{B}_i}$, $M_{\Gamma}$
and $M_{\mathcal{I}_i}$ delivers the smaller generalization error.
On the other hand, with the same setup of $M_{\mathcal{B}_i}$,
$M_{\Gamma}$ and $M_{\mathcal{I}_i}$, the larger $M_{\mathcal{L}_i}$
produces the smaller generalization error. These numerical phenomena
reveal again that quadrature errors coming from loss terms of
interface conditions, boundary conditions and initial conditions
play a significant role as well, in addition to the dominant
quadrature errors arising from the interior spatiotemporal
subdomains. The obtained theoretical result in Section
\ref{sec:error} is thus validated again.

\begin{table}[H]
\centering \caption{Loss error and generalization error vs \# of
sampling points in Example 2 aided by observation points}
\label{tab:NSNS-5star-high-contrast-obs}
\begin{tabular}{ c c c c c c}
\hline \hline $M_{\mathcal{L}_i}$ & $M_{\mathcal{B}_i}$ &
$M_{\Gamma}$&  $M_{\mathcal{I}_i}$  &  Gen-Error & Loss Error
\\ \hline
$10 \times 10 \times 5$ & $4 \times 4 \times 5$ & $4 \times 5$  & $4 \times 4$  & 6.62e-02  & 3.95e-05 \\
$10 \times 10 \times 5$ & $8 \times 4 \times 5$ & $8 \times 5$  & $8 \times 8$  & 5.93e-02  & 3.76e-05 \\
$10 \times 10 \times 5$ & $16 \times 4 \times 5$ & $16 \times 5$  & $16 \times 16$  & 5.34e-02  & 3.92e-05 \\
$10 \times 10 \times 5$ & $32 \times 4 \times 5$ & $32 \times 5$  & $32 \times 32$  & 4.76e-02  & 3.58e-05 \\ \hline
$20 \times 20 \times 10$ & $4 \times 4 \times 10$ & $4 \times 10$  & $4 \times 4$  & 4.94e-02  & 3.24e-05 \\
$20 \times 20 \times 10$ & $8 \times 4 \times 10$ & $8 \times 10$  & $8 \times 8$  & 3.82e-02  & 2.97e-05 \\
$20 \times 20 \times 10$ & $16 \times 4 \times 10$ & $16 \times 10$  & $16 \times 16$  & 4.13e-02  & 2.85e-05 \\
$20 \times 20 \times 10$ & $32 \times 4 \times 10$ & $32 \times 10$  & $32 \times 32$  & 3.28e-02  & 2.76e-05 \\ \hline
$40 \times 40 \times 20$ & $4 \times 4 \times 20$ & $4 \times 20$  & $4 \times 4$  & 3.52e-02  & 2.43e-05 \\
$40 \times 40 \times 20$ & $8 \times 4 \times 20$ & $8 \times 20$  & $8 \times 8$  & 2.47e-02  & 2.28e-05 \\
$40 \times 40 \times 20$ & $16 \times 4 \times 20$ & $16 \times 20$  & $16 \times 16$  & 2.12e-02  & 2.35e-05 \\
$40 \times 40 \times 20$ & $32 \times 4 \times 20$ & $32 \times 20$  & $32 \times 32$  & 1.58e-02  & 2.05e-05 \\
\hline \hline
\end{tabular}
\end{table}
\begin{figure}[H]
    \centering
\includegraphics[width=12cm, height=6cm]{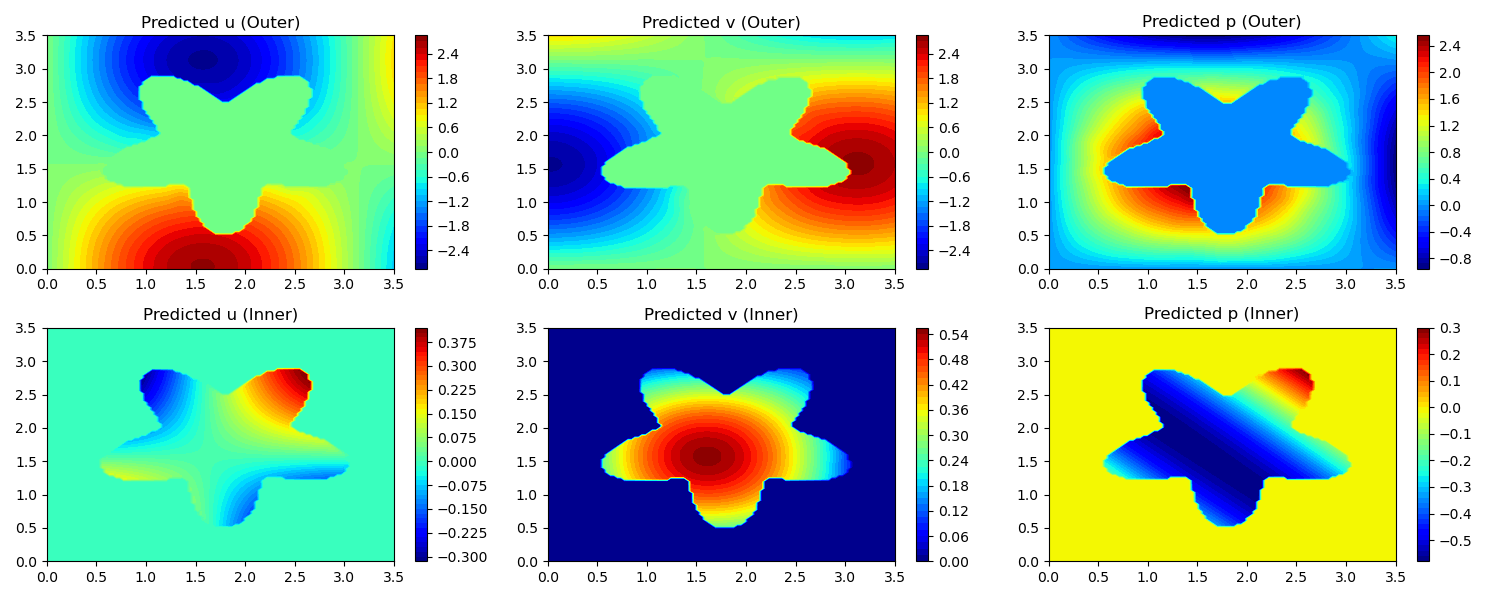}
\caption{The observation-point-aided PINNs outputs of Example 2 on
the terminal planes of the outer and inner subdomains, respectively.
\textbf{Left}: The horizontal velocity $u$; \textbf{Middle}: the
vertical velocity $v$; \textbf{Right}: the pressure $p$;
\textbf{Upper}: outside the five-fold-modulation cyclic-harmonic
interface; \textbf{Lower}: inside the five-fold-modulation
cyclic-harmonic interface.} \label{fig:high-contrast-5star-pinn-10}
\end{figure}


\begin{figure}[H]
    \centering
\includegraphics[width=12cm, height=6cm]{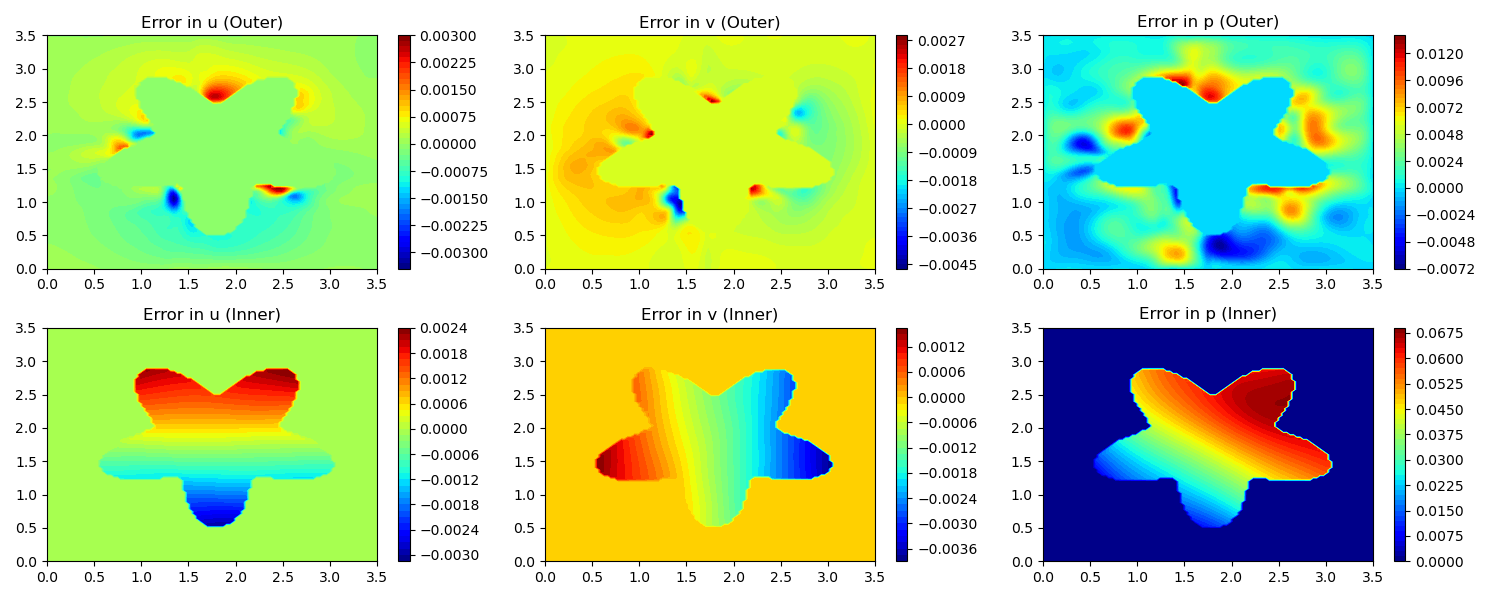}
\caption{Error distributions between the observation-point-aided
PINNs output and exact solution of Example 2 on the terminal planes
of the outer and inner subdomains, respectively. \textbf{Left}: The
horizontal velocity $u$; \textbf{Middle}: the vertical velocity $v$;
\textbf{Right}: the pressure $p$; \textbf{Upper}: outside the
five-fold-modulation cyclic-harmonic interface; \textbf{Lower}:
inside the five-fold-modulation cyclic-harmonic interface.}
\label{fig:high-contrast-5star-error-10}
\end{figure}

Further, in the outer and inner subdomains that are separated by the
five-fold-modulation cyclic-harmonic interface on the terminal plane
$t=1$, Figure \ref{fig:high-contrast-5star-pinn-10} illustrates
numerical results of velocity and pressure predicted by the
developed PINNs approach for the case of jumping coefficients with
the aid of observation points, whose errors from the exact solution
of this example are shown in Figure
\ref{fig:high-contrast-5star-error-10}. We can see that although the
largest errors are still mainly concentrated near the moving
interface, the individual error orders of velocity and pressure are
reduced to the same level as in Example 1, $10^{-3}$ and $10^{-2}$,
respectively.

In summary, Example 2 shows a more challenging case of the
prescribed interface motion due to the complex geometry of moving
interface with a concave feature and high contrast coefficients, in
comparison with Example 1 in which the interface motion exhibits a
relatively simple and convex elliptical shape without jump
coefficients. Therefore, compared to the case of
no-jump-coefficient, numerical errors of this example are increased
in the first place as expected when dealing with large jumps in
physical coefficients across the moving interface. However, with the
help of some observation data points on the moving interface for the
fluid pressure only, the developed PINNs approach still achieves a
reasonable accuracy that is comparable with the case of
no-jump-coefficient, which means that adding appropriate observation
data points can significantly improve the accuracy of PINNs
approach, especially in the case of high contrast coefficients with
a complex geometry. So far, both Examples 1 and 2 demonstrate the
effectiveness of the proposed PINNs/meshfree method for two-phase
flow moving interface problems with various configurations of
interface motion and physical parameters, including the prescribed
deforming, translating and rotating interface motion with or without
high-contrast physical coefficients.

\subsection{Example 3: The case of solution-driven moving interface}\label{num:example6}
In this section, we consider a more realistic also the most
challenging example of two-phase flow interface problem in which the
interface motion is driven by the solution. Precisely, the current
position of interface is entirely determined by the fluid
displacement across the interface that is the time integration of
fluid velocity field, $\int_0^t \bm v(\bx|_{\Gamma(\tau)},\tau) \,
d\tau$, with an initial fluid displacement of zero, calculated by
$\Gamma(t) = \Gamma(0) + \int_0^t \bm v(\bx|_{\Gamma(\tau)},\tau) \,
d\tau$ for $t\in(0,T]$. In practice, such an example can represent
the most physically realistic scenario of two-phase flow moving
interface problems, where the interface evolution between two kinds
of fluid flow is solely driven by the solution of fluid velocity
across the interface.

In this example, we define the initial interface position $\Gamma(0)
$ as a circle with the center $(1.5, 1.5)$ and the radius $1.0$
within $\Omega := [0,3] \times [0,3]\subset \mathbb{R}^2$, and set
$T=1$. Then, $\Omega_2(0) := \{(x,y)\in \Omega \ |\ (x-1.5)^2+
(y-1.5)^2 < 1 \}$,
$\Omega_1(0)=\Omega\backslash\overline{\Omega_2(0)}$, and
$\Gamma(0):=\{(x,y)\in
\partial\Omega_1(t)\cap\partial\Omega_2(t) \ |\ (x-1.5)^2+ (y-1.5)^2 = 1 \}$.
We further set the physical parameters as $\rho_1=1$, $\rho_2 =
1000$, $\mu_1 = 1$, and $\mu_2 = 1000$ to represent a high contrast
scenario, and take the following functions $(\bm{v}_i,\ p_i)$,
$i=1,2$, as the exact solution for velocity and pressure fields of
the two-phase flow interface problem by appropriately choosing
$\bm{f}_i,\ \bg_i,\ \bv_i^b,\ \bv_i^0$, $i=1,2$, in
(\ref{eqn:interface-model}),
\begin{align}
\bm{v}_1 = \bm{v}_2=
\begin{pmatrix}
\cos t \sin x \cos y \\
-\cos t \cos x \sin y
\end{pmatrix}, \ p_1 =   e^t \sin x \sin y, \ p_2 =   \cos t \cos(x +
y),\label{realsolution3}
\end{align}
which induces $\bg_1=0$ and $\bg_2\neq 0$.


To describe an interface motion that is entirely driven by the
solution of two-phase flow along the time, we need to update the
moving interface $\Gamma(t)$, which is evolved from $\Gamma(0)$, by
virtue of the PINNs prediction of fluid velocity as follows,
\begin{equation}\label{interface-update}
\begin{array}{rcl}
\Gamma(t)|_{xy\text{-plane}:\, z=t} &=& \Gamma(0) + \int_0^t
\mathcal{V}_{\mathcal{NN}_2}(\mathbf{X}_{2}|_{\Gamma(\tau)};
\bm{\Theta}_2) \, d\tau\\
&\approx& \Gamma(0) + \sum\limits_{n=1}^{N_t}\omega_n
\mathcal{V}_{\mathcal{NN}_2}\left((\mathbf{x}_2|_{\Gamma(t_n)},t_n);
\bm{\Theta}_2\right),\, \forall\, 0\leq t\leq T,
\end{array}
\end{equation}
but through a nonlinear iteration process, numerically, since
$\mathcal{V}_{\mathcal{NN}_2}(\mathbf{X}_{2}; \bm{\Theta}_2)$
depends on $\Gamma(t)$ as well. Here we adopt a numerical quadrature
to calculate the time integration in (\ref{interface-update}), with
$\omega_n$, $t_n$ as quadrature weights and points in time,
respectively, for $n=1,\cdots,N_t$, where $t_1=0$, $t_{N_t}=t$. In
fact, we arrange quadrature points $\{t_n\}_1^{N_t}$ in
(\ref{interface-update}) in the ascending order, and choose their
values to be the same as the temporal coordinate of sampling points
which are evolved from their initial positions on $\Gamma(0)$ by
following the fluid trajectory. On the other hand, the distribution
of sampling points on the projection curve of $\Gamma(t)$ onto the
$xy$-plane, $z=t$, preserves the same connectivity at any time $t$.

Before describing such a nonlinear iteration algorithm for tracking
the interface motion through (\ref{interface-update}), we first
elucidate an adaptive strategy to update all weight coefficients of
loss terms, $\omega_{\mathcal{L}_i}$, $\omega_{\mathcal{B}_i}$,
$\omega_{\mathcal{I}_i}$ and $\omega_{\Gamma}$, $i=1,2$, at the
$n$-th epoch during the main training phase of $80{,}000$ epoches.
Denote by $J := \{\mathcal{L}_1, \mathcal{L}_2, \Gamma,
\mathcal{B}_1, \mathcal{B}_2, \mathcal{I}_1, \mathcal{I}_2\}$ the
index set of all loss terms and by $\mathcal{F}_j^n$ the current
discrete loss of type $j \in J$ at the $n$-th epoch, which is
updated as $\bar{\mathcal{F}}_j^{(n)} = (1-\beta)
\bar{\mathcal{F}}_j^{(n-1)} + \beta \mathcal{F}_j^{(n)}$ with $\beta
= 0.2$. Compute the relative scale $r_j^{(n)} =
\bar{\mathcal{F}}_j^{(n)} \big/ \big( \frac{1}{7} \sum_{k \in J}
\bar{\mathcal{F}}_k^{(n)} \big)$, set the raw weight
$\tilde{\omega}_j^{(n)} = 1 \big/ (r_j^{(n)} + \varepsilon)$ with
$\varepsilon = 10^{-6}$, and update $\omega_j^{(n)} = (1-\alpha)\,
\omega_j^{(n-1)} + \alpha\, \tilde{\omega}_j^{(n)}$ with $\alpha =
0.1$, Finally, the weight is set by a normalization $\omega_j^{(n)}
\leftarrow \omega_j^{(n)} \big/ \big(\frac{1}{7}\sum_{k \in J}
\omega_k^{(n)}\big)$ such that $\frac{1}{7} \sum_{k \in J}
\omega_k^{(n)} = 1$.


In addition, the learning rate is also adaptively scheduled to decay
from $1 \times 10^{-3}$ to $1 \times 10^{-6}$ using the
CosineAnnealingLR scheduler~\cite{pytorch_cosineannealinglr} during
the main training phase. Further, to overcome the problem of
insufficient numerical precision caused by the high-contrast jumping
coefficients, we add some observation points on the interface for
the fluid pressure by assigning its exact solution, as done
in Example 2. 
Particularly, we take 5 points on the circular initial interface
$\Gamma(0)$ with angles $\theta \in \{0, 2\pi/5, 4\pi/5, 6\pi/5,
8\pi/5\}$, whose positions at time $t>0$ are then obtained by
$\bx=\bx(0)+\int_0^t \bm v_2(\bx(0),\tau) \, d\tau$, where $\bv_2$
is the exact solution of fluid velocity in the inner subdomain. We
evaluate these 5 points' trajectories at 10 time points, $t \in
\{0.01, 0.12, 0.23, 0.34, 0.45, 0.56, 0.67, 0.78, 0.89, 1.00\}$,
resulting in 50 points with the coordinates $(\bx,t)\in \Gamma(t)$
that lie exactly on the interface driven by the exact solution of
fluid velocity.

Then, a complete numerical algorithm of PINNs approach for solving
two-phase flow problems with a solution-driven moving interface can
be described in Algorithm \ref{alg:solution-dependent-tracking}.
\begin{breakablealgorithm}\label{alg:solution-dependent-tracking}
\caption{PINNs/meshfree method for solving solution-driven moving
interface problems}
\begin{algorithmic}
\State \textbf{[Initialization]:}
\begin{itemize}
    \item Set spatial training points: $\mathbf{x}_{\Gamma,0}^{i}\in\Gamma(0)$, $i = 1, \ldots,
    N_{\Gamma}$, which are arranged in sequence to form a polygonal initial interface in $\mathbb{R}^2$.
    \item Set temporal training points:
    $0=t^{i}_{\Gamma,1}<t^{i}_{\Gamma,2}<\cdots<t^{i}_{\Gamma,N_{T,i}}=T$, where $N_{T,i}\geq
    2$, corresponding to each $\mathbf{x}_{\Gamma,0}^{i}\in\Gamma(0)$, $i = 1, \ldots,
    N_{\Gamma}$. Thus an initial spatiotemporal training set,
    $\bX^0_\Gamma:=\big\{\{\bm{S}_{ij}\}_{j=1}^{N_{T,i}}\big\}_{i=1}^{N_\Gamma}$
    with $\bm{S}_{ij}=(\mathbf{x}_{\Gamma,0}^{i},t^{i}_{\Gamma,j})$, is
    defined on the initial interface 
    $\Gamma(0)\times[0,T]\subset\mathbb{R}^3$ with the number of sampling points
    $M_{\Gamma}=\sum_{i=1}^{N_{\Gamma}}
    N_{T,i}$.
    \item Set sampling parameters, $M_{\mathcal{L}_i}$, $M_{\mathcal{B}_i}$, $M_{\mathcal{I}_i}$,
    $i=1,2$, and generate a spatiotemporal training set,
    $\bX_{\overline{\Omega}\backslash\Gamma}$ in $\overline{\Omega}\backslash\Gamma(0)\times[0,T]\subset\mathbb{R}^3$,
    which is separated by $\Gamma(0)\times[0,T]$.
    \item Set zero-initialized neural networks, $\mathcal{NN}_{1}$ and
    $\mathcal{NN}_{2}$, with $\bm{\Theta}_1=\bm{\Theta}_2=\bm{0}$,
    and set the total number of epoches, $N_{\text{epoch}}$.
\end{itemize}

\vspace{0.1cm} \State \textbf{[The PINNs' training process]:}

\For{$ep=1$ to $N_{\text{epoch}}$}
        \State \textbf{[Update training points on the interface]:}
        \For{each interface point $\mathbf{x}_{\Gamma,0}^{i}\in\Gamma(0)$, $i = 1$ to $N_{\Gamma}$,}
        \For{$k = 2$ to $N_{T,i}$}
        \State $\bullet$ Call the subroutine:
        \State $\ \ $
        \textbf{Interface-Position\big($t^i_{\Gamma,k}$,$N_{\Gamma}$,
        $\{N_{T,i}\}_{i=1}^{N_\Gamma}$,$\bX_\Gamma^{ep-1}$,
$\mathcal{V}_{\mathcal{NN}_2}(\bX_\Gamma^{ep-1};$ $\bm{\Theta}_2)$;
$\{(\mathbf{x}_{\Gamma,k-1}^{i},t^i_{\Gamma,k})\}_{i=1}^{N_\Gamma}
$\big)}.

            \EndFor
        \EndFor
        \State $\bullet$ Update the training set on the interface:
        $\bX_\Gamma^{ep}:=\big\{\{\bm{S}_{ij}\}_{j=1}^{N_{T,i}}\big\}_{i=1}^{N_\Gamma}$,
    where
    $\bm{S}_{ij}=(\mathbf{x}_{\Gamma,j-1}^{i},t^{i}_{\Gamma,j})$.



   \State \textbf{[Generate training sets in subdomains]:}
    \State $\bullet$ Initialize empty subsets of training points,
    $\bX^{ep}_{1} = \bX^{ep}_{2} = \emptyset$.

    \For{each interior training point $(\mathbf{x}^i,t^i)\in \bX_{{\Omega}\backslash\Gamma}$,
    $i=1$ to $M_{\mathcal{L}_1}+M_{\mathcal{L}_2}$,}
        \State $\bullet $ Call the subroutine:
        \State $\ \ $
        \textbf{Interface-Position\big($t^i,N_{\Gamma},\{N_{T,i}\}_{i=1}^{N_\Gamma},\bX_\Gamma^{ep-1},
        \ \mathcal{V}_{\mathcal{NN}_2}(\bX_\Gamma^{ep-1};\bm{\Theta}_2)$; $
\{(\mathbf{x}_\Gamma^{i},t^i)\}_{i=1}^{N_\Gamma} $\big)}.
            \State $\bullet $ Determine if $\mathbf{x}^i$ locates
            inside the polygon formed by
            $\{\mathbf{x}_\Gamma^{i}\}_{i=1}^{N_\Gamma}$ in
            sequence, by means of ``Ray Casting'' algorithm (See Remark
            \ref{rmk:ray-casting}).
            \If{[inside]}
                \State $\bullet $ Add $(\mathbf{x}^i,t^i)$ to
                $\bX_{2}^{ep}$.
            \Else
                \State $\bullet $ Add $(\mathbf{x}^i,t^i)$ to
                $\bX_{1}^{ep}$.
            \EndIf
    \EndFor

    \State $\bullet $ Form the training set: $\bX^{ep}:=\bigcup_{i=1}^2
    \left(\bX_i^{ep}\cup\bX_{\partial\Omega_i\backslash\Gamma}
    \cup\bX_{\Omega_i(0)}\right)\bigcup\bX_\Gamma^{ep}$.
    \State $\bullet $ Update the learning rate: $\eta \leftarrow$
    CosineAnnealingLR($\eta,\ ep$).
    \State $\bullet $ Set the weights of loss terms, $\omega_{\mathcal{L}_i},\omega_{\mathcal{B}_i},
    \omega_{\mathcal{I}_i},\omega_{\Gamma}$, $i=1,2$, as follows:
    \If {$ep$ $\leq 20,000$}
    \State $\bullet $ Set
    $\omega_{\mathcal{L}_i}=\omega_{\mathcal{B}_i}=\omega_{\mathcal{I}_i}
    =\omega_{\Gamma}=1$, $i=1,2$.
    \Else
    \State $\bullet $ Use the adaptive weighting strategy as
    described above for an adaptive setting.
    \EndIf
    \State $\bullet $ Add aforementioned observation data points for the fluid
    pressure, appropriately, if the physical parameters jumps high across the interface.
    \State $\bullet $ Apply the Adam optimizer to minimize
    (\ref{eqn:DNN-interface-method}) or (\ref{newtotalloss}), and
    obtain the current step's predicted solutions,
$\mathcal{V}_{\mathcal{NN}_i}(\bX^{{ep}};\bm{\Theta}_i)$ and
$\mathcal{P}_{\mathcal{NN}_i}(\bX^{{ep}};\bm{\Theta}_i)$, $i=1,2$,
based on the current training set $\bX^{{ep}}$.

\EndFor

\State $\bullet $ \textbf{Return:} The trained networks
$\mathcal{NN}_1$ and $\mathcal{NN}_2$ with the final training set
$\bX=\bX^{ep}$, the finally predicted solution
$\mathcal{V}_{\mathcal{NN}}(\bX;\bm{\Theta}^*)$ and
$\mathcal{P}_{\mathcal{NN}}(\bX;\bm{\Theta}^*)$, and the interface
evolution history along the training epoches,
$\{\bX_\Gamma^{i}\}_{i=0}^{N_{epoch}}$.

\vspace{0.2cm} \State \textbf{[Subroutine for updating interface
position]:} \State
\emph{\textbf{Interface-Position\big($t,N_{\Gamma},\{N_{T,i}\}_{i=1}^{N_\Gamma},\bX_\Gamma,
\mathcal{V}_{\mathcal{NN}}(\bX_\Gamma;\bm{\Theta});\{(\mathbf{x}_\Gamma^{i},t)\}_{i=1}^{N_\Gamma}
$\big)}}
\State $\bullet$ Take spatial training points on the interface at
the initial time from the given training set $\bX_\Gamma$:
$\{(\mathbf{x}_{\Gamma,0}^{i},0)\}_{i=1}^{N_\Gamma}
=\{\bm{S}_{i1}\}_{i=1}^{N_\Gamma}\in\bX_\Gamma$.
        \For{$i = 1$ to $N_{\Gamma}$}
            \State $\bullet$ Take training points corresponding to each spatial
            point $\mathbf{x}_{\Gamma,0}^{i}\in\Gamma(0)$ from the given training set $\bX_\Gamma$:
            $\{(\mathbf{x}^{i}_{\Gamma,j-1},t_{\Gamma,j}^{i})\}_{j=2}^{N_{T,i}}
=\{\bm{S}_{ij}\}_{j=2}^{N_{T,i}}\in\bX_\Gamma$.
            \For{$k = 2$ to $N_{T,i}$}
            \If {$t_{\Gamma,k-1}^{i}< t \leq t_{\Gamma,k}^{i}$}
            \State $\bullet$ Set $n=k$.
            \State $\bullet$ Set $\mathbf{x}^{i}_{\Gamma,t}=\frac{t-t_{\Gamma,k-1}^{i}}
            {t_{\Gamma,k}^{i}-t_{\Gamma,k-1}^{i}}\mathbf{x}^{i}_{\Gamma,k-1}
            +\frac{t_{\Gamma,k}^{i}-t}
            {t_{\Gamma,k}^{i}-t_{\Gamma,k-1}^{i}}\mathbf{x}^{i}_{\Gamma,k-2}$.
            \EndIf
            \EndFor

            \State $\bullet$ Set the displacement, $\mathbf{d}=\mathbf{0}$.
            \For{$k = 1$ to $n-1$}
                \State $\bullet\, $ Evaluate the velocity at
                $t_{\Gamma,k}^{i}$: $\mathbf{v}_k = \mathcal{V}_{\mathcal{NN}}
                (({\mathbf{x}_{\Gamma,k-1}^{i}},t_{\Gamma,k}^{i});\bm{\Theta})$.
                \If {$k=n-1$}
                \State $\bullet\, $ Evaluate the velocity at
                $t$: $\mathbf{v}_{k+1} =\mathcal{V}_{\mathcal{NN}}
                (({\mathbf{x}_{\Gamma,t}^{i}},t);\bm{\Theta})$.
                \State $\bullet\, $ Set $\Delta
                t=t-t_{\Gamma,k}^{i}$.
                \Else
                \State $\bullet\, $ Evaluate the velocity at
                $t_{\Gamma,k+1}^{i}$: $\mathbf{v}_{k+1} = \mathcal{V}_{\mathcal{NN}}
                (({\mathbf{x}_{\Gamma,k}^{i}},t_{\Gamma,k+1}^{i});\bm{\Theta})$.
                \State $\bullet\, $ Set $\Delta
                t=t_{\Gamma,k+1}^{i}-t_{\Gamma,k}^{i}$.
               \EndIf
                \State $\bullet\, $ Update the displacement using the trapezoidal
                rule, $$\mathbf{d} \leftarrow \mathbf{d} + \frac{\Delta t}{2}(\mathbf{v}_k +
\mathbf{v}_{k+1}).$$
            \EndFor
            \State $\bullet$ Update the interface point's position at time $t$:
            $\mathbf{x}^{i}_\Gamma = \mathbf{x}_{\Gamma,0}^{i} + \mathbf{d}$.
        \EndFor
            \State $\bullet\, $ \textbf{Return:} The updated interface training
            points at time $t$: $\{(\mathbf{x}^{i}_\Gamma,t)\}_{i=1}^{N_\Gamma}$.

\end{algorithmic}
\end{breakablealgorithm}

\begin{remark}\label{rmk:ray-casting}
In Algorithm \ref{alg:solution-dependent-tracking}, we employ a
so-called ``Ray Casting'' algorithm to determine
whether a point lies inside or outside a simple
polygon~\cite{preparata1985computational}. From the query point, one
casts a ray (e.g., a horizontal half-line) to infinity and counts
the number of times it crosses the polygon boundary: an odd number
of crossings indicates that the point is inside, while an even
number of crossings means it is outside. This yields a robust and
simple domain classification for a deforming interface represented
by the vertex list $\{\mathbf{x}_{\Gamma}^i\}_{i=1}^{N_\Gamma}$. For
a detailed algorithm, one can refer to \cite[Section
2.2]{preparata1985computational}.
\end{remark}

As elucidated in Algorithm \ref{alg:solution-dependent-tracking}, we
can randomize a training set to train the developed PINNs framework
for the presented two-phase flow problem whose moving interface is
driven by the predicted fluid velocity. However, merely to
investigate the convergence behavior based upon a
training-set-doubling strategy, as done for previous examples, we
uniformly generate the training set for this example as well,
and utilize it in Algorithm \ref{alg:solution-dependent-tracking} to
obtain convergent generalization errors as shown in Table
\ref{tab:NSNS-solution-dependent}, the interface tracking history in
Figure \ref{fig:sampling-points-comparison}, and distributions of
numerical results in Figures \ref{fig:solution-dependent-pinn-10}
and \ref{fig:solution-dependent-error-10}.
\begin{table}[H]
\centering \caption{Loss error and generalization error vs \# of
sampling points in Example 3 with adaptive interface tracking}
\label{tab:NSNS-solution-dependent}
\begin{tabular}{ c c c c c c}
\hline \hline $M_{\mathcal{L}_i}$ & $M_{\mathcal{B}_i}$ &
$M_{\Gamma}$&  $M_{\mathcal{I}_i}$  &  Gen-Error & Loss Error
\\ \hline
$10 \times 10 \times 5$ & $4 \times 4 \times 5$ & $4 \times 5$  & $4 \times 4$  & 4.25e-02  & 3.15e-05 \\
$10 \times 10 \times 5$ & $8 \times 4 \times 5$ & $8 \times 5$  & $8 \times 8$  & 3.67e-02  & 2.88e-05 \\
$10 \times 10 \times 5$ & $16 \times 4 \times 5$ & $16 \times 5$  & $16 \times 16$  & 3.14e-02  & 2.56e-05 \\
$10 \times 10 \times 5$ & $32 \times 4 \times 5$ & $32 \times 5$  &
$32 \times 32$  & 2.72e-02  & 2.24e-05 \\ \hline
$20 \times 20 \times 10$ & $4 \times 4 \times 10$ & $4 \times 10$  & $4 \times 4$  & 3.28e-02  & 2.03e-05 \\
$20 \times 20 \times 10$ & $8 \times 4 \times 10$ & $8 \times 10$  & $8 \times 8$  & 2.76e-02  & 1.85e-05 \\
$20 \times 20 \times 10$ & $16 \times 4 \times 10$ & $16 \times 10$  & $16 \times 16$  & 2.33e-02  & 1.68e-05 \\
$20 \times 20 \times 10$ & $32 \times 4 \times 10$ & $32 \times 10$
& $32 \times 32$  & 1.91e-02  & 1.47e-05 \\ \hline
$40 \times 40 \times 20$ & $4 \times 4 \times 20$ & $4 \times 20$  & $4 \times 4$  & 2.67e-02  & 1.35e-05 \\
$40 \times 40 \times 20$ & $8 \times 4 \times 20$ & $8 \times 20$  & $8 \times 8$  & 2.14e-02  & 1.22e-05 \\
$40 \times 40 \times 20$ & $16 \times 4 \times 20$ & $16 \times 20$  & $16 \times 16$  & 1.78e-02  & 1.08e-05 \\
$40 \times 40 \times 20$ & $32 \times 4 \times 20$ & $32 \times 20$  & $32 \times 32$  & 1.42e-02  & 9.45e-06 \\
\hline \hline
\end{tabular}
\end{table}
From Table \ref{tab:NSNS-solution-dependent} we can see that the
proposed PINNs/meshfree method, together with the developed
interface tracking Algorithm \ref{alg:solution-dependent-tracking},
still achieves comparable accuracy for the solution-driven moving
interface case of two-phase flow problems, in comparison with
previous examples, where the generalization errors and loss errors
are on the same order of $10^{-2}$ and $10^{-5}$, respectively,
demonstrating the effectiveness of the physically consistent method
in interface tracking for complex two-phase flow moving interface
problems. Figure \ref{fig:sampling-points-comparison} illustrates
the history of a solution-driven moving interface tracked along the
training steps in the $(2+1)$-th dimensional spatiotemporal space,
starting with an initial guess of interface evolution at the initial
epoch: a cirular cylinderical interface, and ending with a finally
determined deforming and irregular interface at the final epoch,
which separates the training set to be internal and external subsets
indicated in different colors at each epoch.
\begin{figure}[H]
\centering \subfigure[Initial epoch]{
\includegraphics[width=3.75cm, height=3.5cm]{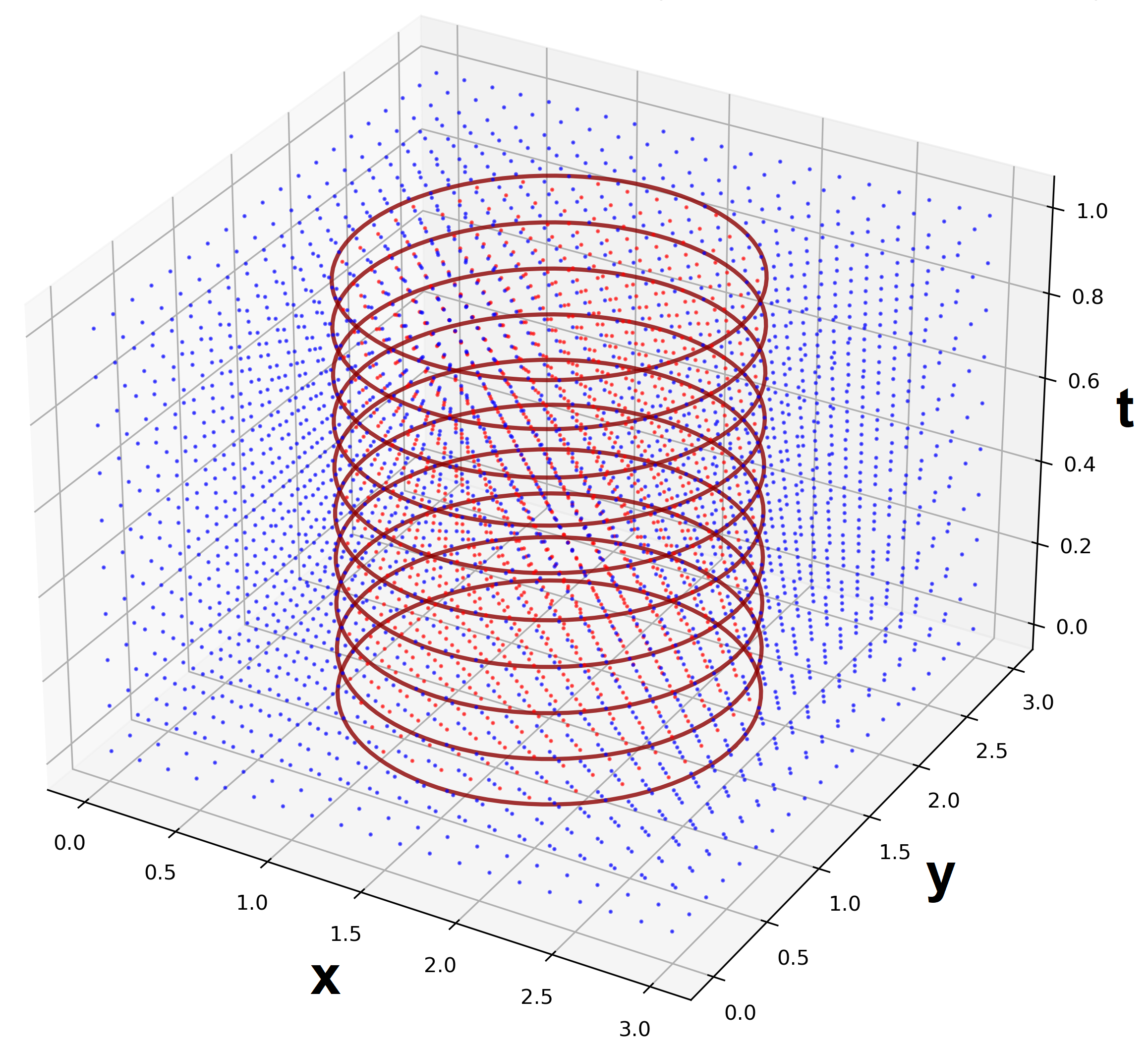}
\label{fig:sampling-points-start}} \subfigure[Middle epoch]{
\includegraphics[width=3.75cm, height=3.5cm]{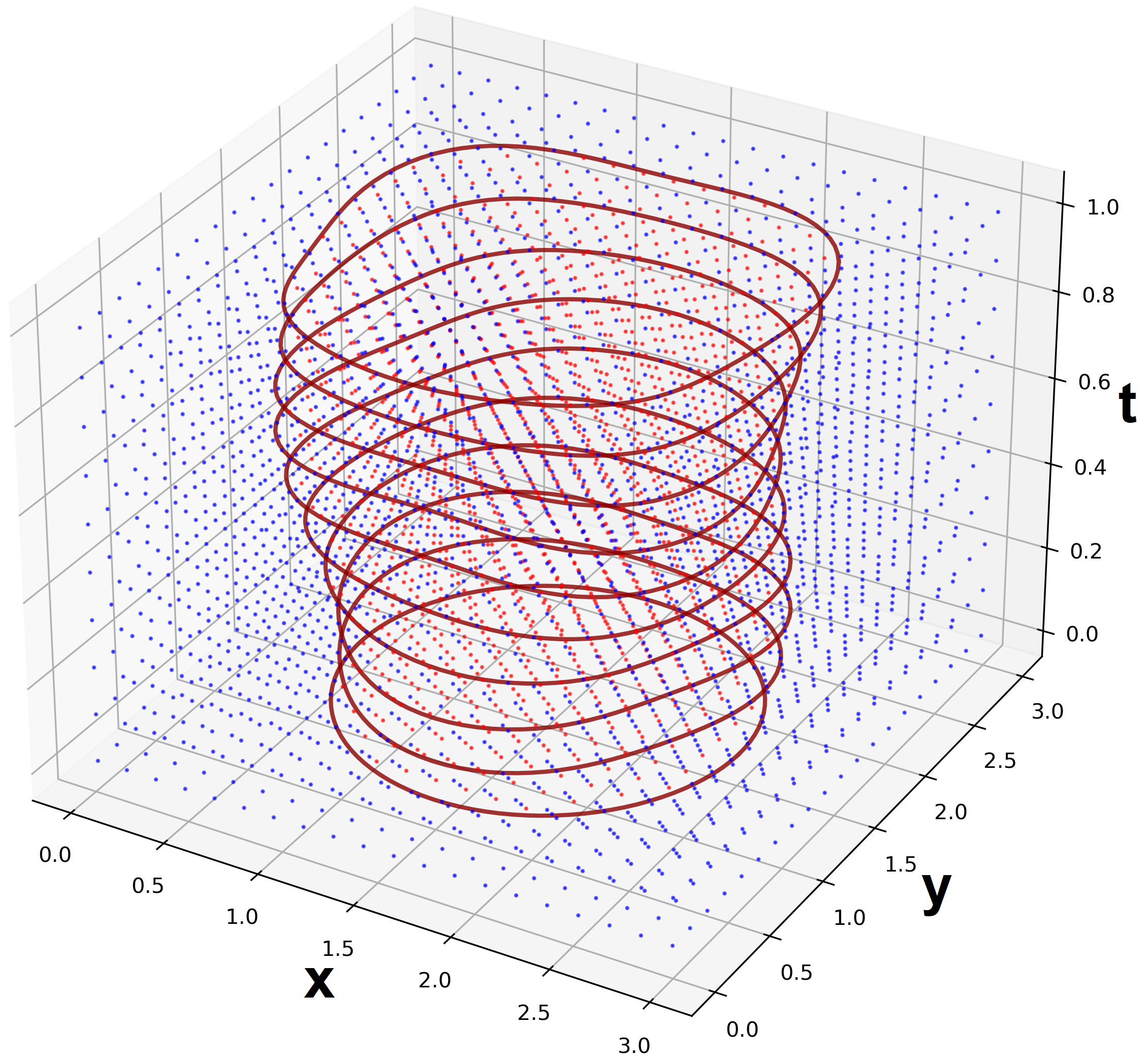}
\label{fig:sampling-points-mid}}\subfigure[Final epoch]{
\includegraphics[width=3.75cm, height=3.5cm]{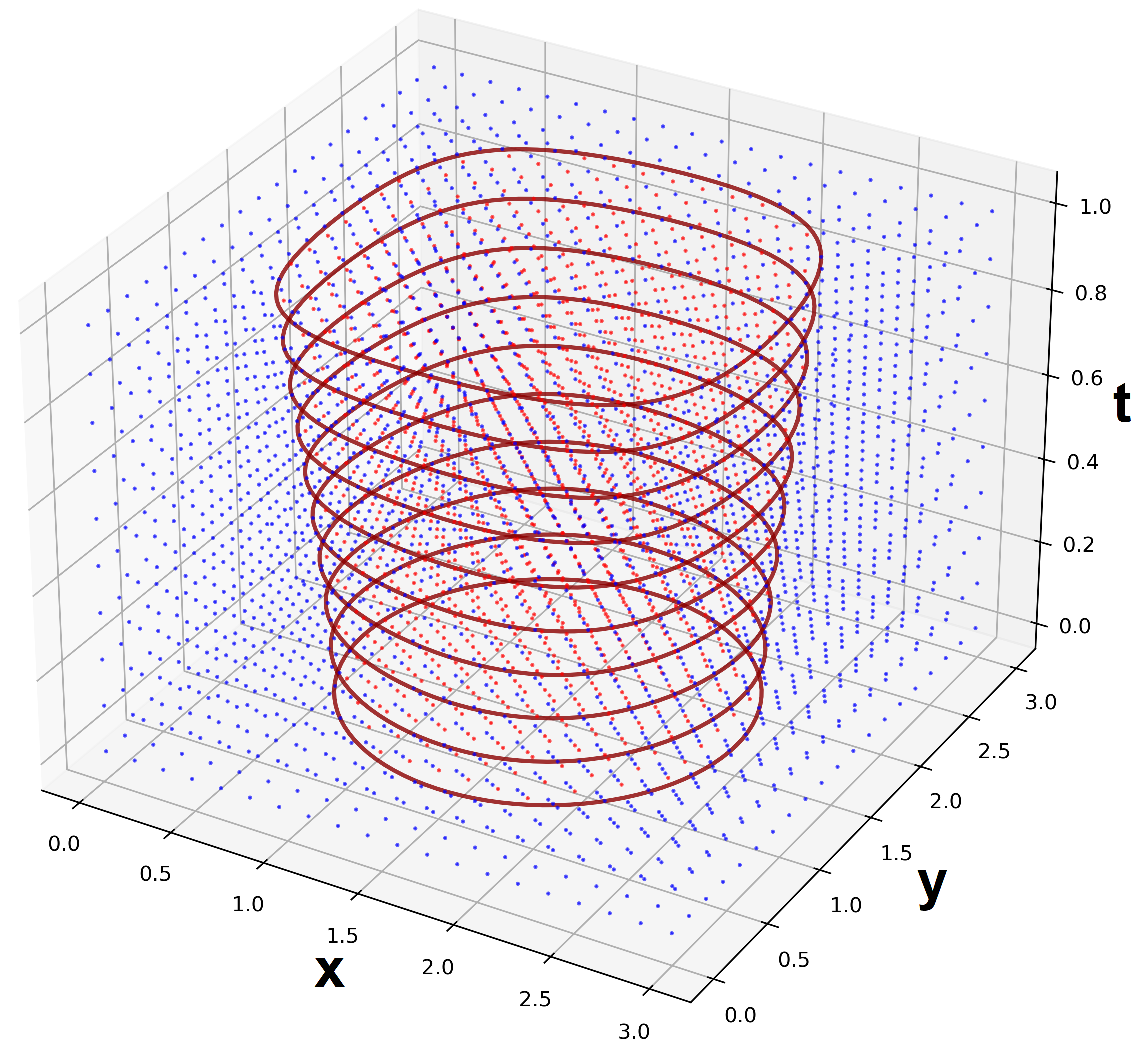}
\label{fig:sampling-points-end} } \caption{Training points
distribution separated by a solution-driven interface tracked along
the training epoches and classified as internal (red) and external
(blue) subsets, where projections of interface onto the $xy$-plane,
$z=t\in\{0.01,\, 0.12,\, 0.23,\, 0.34,\, 0.45,\, 0.56,\, 0.67,\,
0.78,\, 0.89,\, 1.00\}$ are plotted as red curves. \textbf{(a)} In
the beginning of training, the interface is initially set as a
circular cylinder; \textbf{(b)} in the middle of training, the
interface keep deforming due to the solution-driven mechanism by the
PINNs output; \textbf{(c)} at the end of training, the interface is
finally deformed as an irregular shape due to the dependency of
PINNs output.} \label{fig:sampling-points-comparison}
\end{figure}

\begin{figure}[H]
    \centering
\includegraphics[width=12cm, height=6cm]{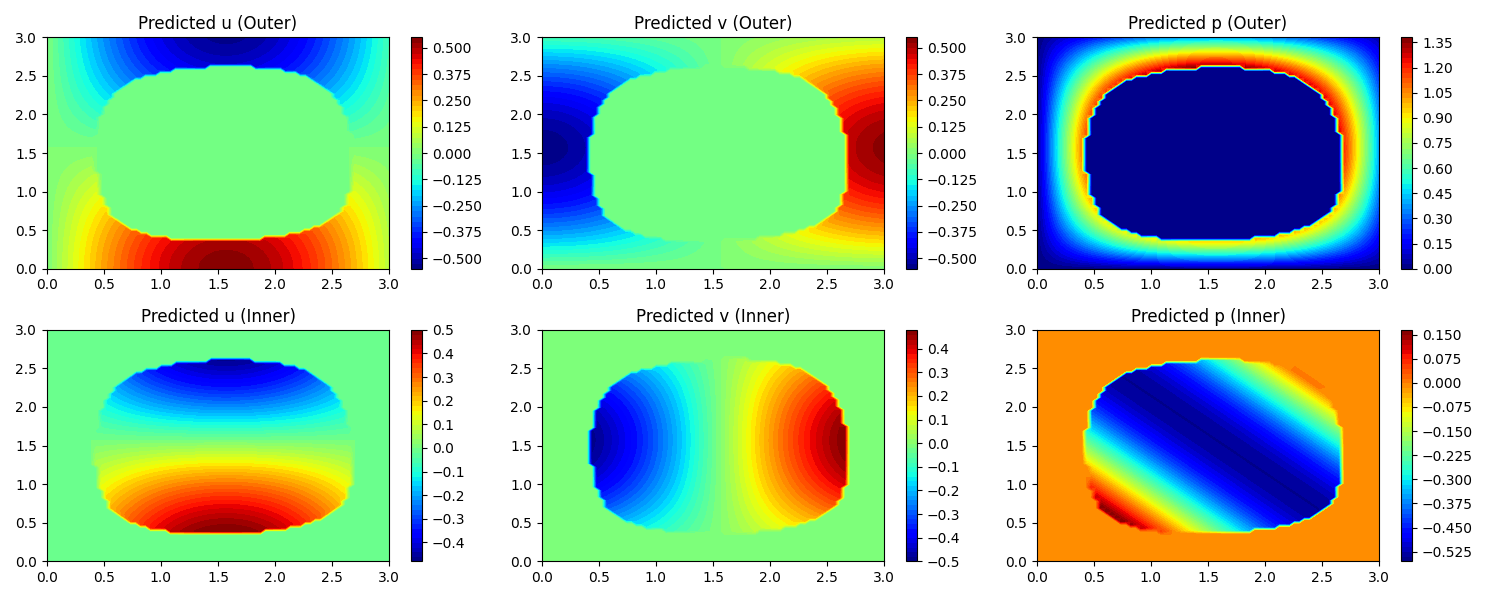}
\caption{PINNs outputs of Example 3 at the terminal time.
\textbf{Left}: The horizontal velocity $u$; \textbf{Middle}: the
vertical velocity $v$; \textbf{Right}: the pressure $p$;
\textbf{Upper}: outside the moving interface; \textbf{Lower}: inside
the moving interface.} \label{fig:solution-dependent-pinn-10}
\end{figure}


\begin{figure}[H]
    \centering
\includegraphics[width=12cm, height=6cm]{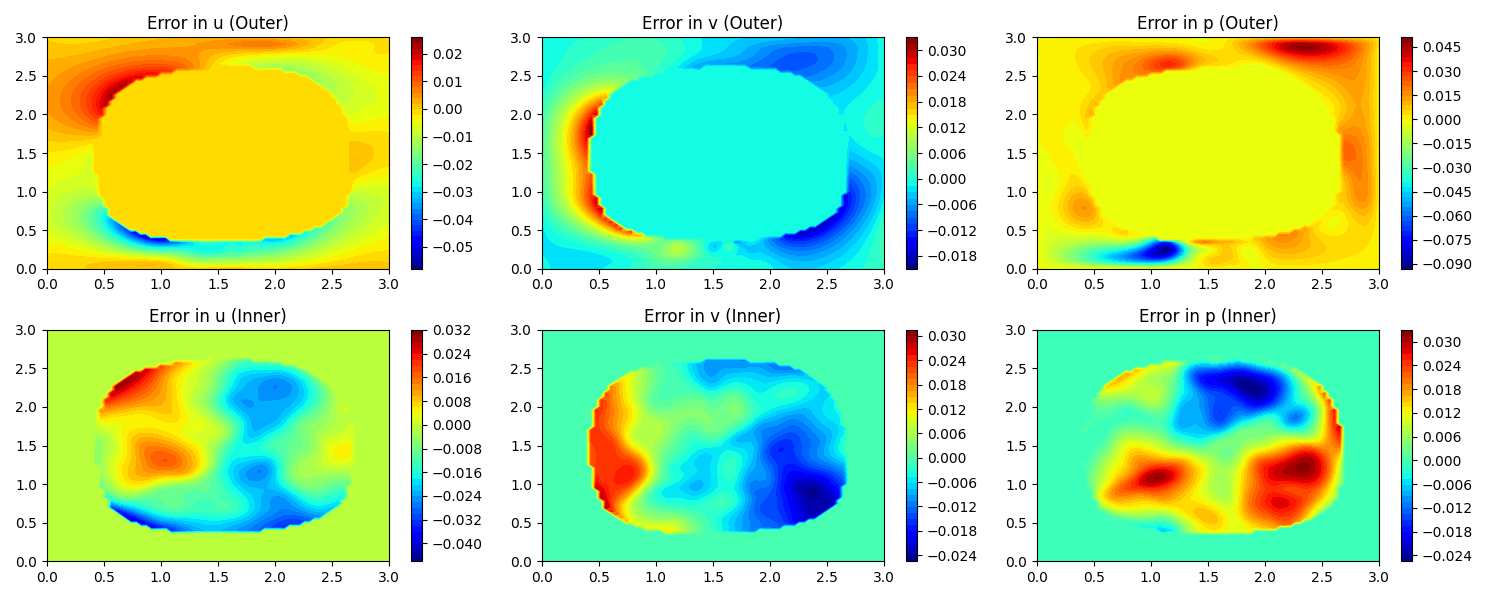}
\caption{Error distributions between PINNs output and exact solution
of Example 3 at the terminal time. \textbf{Left}: The horizontal
velocity $u$; \textbf{Middle}: the vertical velocity $v$;
\textbf{Right}: the pressure $p$; \textbf{Upper}: outside the moving
interface; \textbf{Lower}: inside the moving interface.}
\label{fig:solution-dependent-error-10}
\end{figure}
Figure \ref{fig:solution-dependent-pinn-10} illustrates the PINNs
output of velocity and pressure at the terminal time $t = 1$, with
the distribution of errors from the exact solution shown in Figure
\ref{fig:solution-dependent-error-10}, where the largest errors are
still mainly concentrated near the moving interface on the order of
$10^{-2}$. Overall, the interface tracking approach for the
solution-driven moving interface case strengthens the physical
consistency of the adopted PINNs framework with the underlying
dynamical mechanism of two-phase fluid. The evolution process of
interface deformation within the PINNs' training epoches is
accurately tracked in terms of a nonlinear iteration strategy, which
is especially important for solving complex flow patterns and high
contrast scenarios. In addition, the dynamic regeneration of
training points along the training epoches ensures an accurate
subdomain classification and prevents the interface evolution from
numerical artifacts.

\section{Conclusion}\label{sec:conclusion}
In this work, we develop a comprehensive physics-informed neural
networks (PINNs) framework for solving two-phase flow problems with
moving interfaces. The proposed PINNs/meshfree method addresses
fundamental challenges in computational fluid dynamics by providing
a robust, theoretically grounded approach to handle dynamic
interface problems without requiring any mesh generation or
adaptation. Our key contributions include: (1) the establishment of
a rigorous theoretical foundation by developing a novel approach to
handle moving interface terms in terms of the Reynolds transport
theorem, and delivering comprehensive energy estimates and
convergence guarantees with error bounds that reveal the dominance
of quadrature errors from boundary conditions, initial conditions
and interface conditions, besides the interior PDEs' residual; (2) a
practical guidance of optimal sampling point distribution for
solving two-phase flow moving interface problem, where a
least-squares formulation using piecewise neural network structures
is optimized to approximate velocity and pressure fields in each
fluid subdomain; (3) the development of a practical algorithm to
track the unknown moving interface driven by the solution in an
iterative fashion, which is implemented by a feasible PyTorch-based
solution with a two-phase training strategy to demonstrate
the computational efficiency. 

Our extensive numerical experiments validate the theoretical
framework across three challenging testing examples, including two
cases of the prescribed interface motion and one case of the
solution-driven interface motion, with and/or without high-contrast
physical parameters. Numerical results confirm theoretical
predictions regarding error dominance by quadrature terms and
demonstrate consistent convergence behavior with generalization
errors decreasing as sampling point density increases on critical
boundaries and interfaces. The developed method's meshfree nature,
theoretical rigor, and generalizability make it particularly
attractive for problems with rapidly changing or complex interface
geometries, while its ability to handle high-contrast material
properties and complex geometric transformations positions it as a
valuable tool for real-world engineering applications in
microfluidics, biomedical engineering, and environmental fluid
dynamics. In the future work, we will extend our research directions
to enhance robustness for singular solutions, develop adaptive
sampling strategies, solve higher dimensional problems, explore
multiphysics coupling applications, and etc. Further, as an
advancement in computational fluid dynamics, the developed PINNs
approach may also offer a powerful alternative to traditional
mesh-based methods while maintaining theoretical rigor and practical
applicability for challenging fluid-structure interaction problems
as well.

\section{Acknowledgment}
X. Xie was partially supported by National Natural Science
Foundation of China (12571434) and Open Research Project of National
Key Laboratory of Fundamental Algorithms and Models for Engineering
Simulation. P. Sun was partially supported by a grant from the
Simons Foundation (MPS-TSM-00706640).

\newpage
\bibliographystyle{plain}
\bibliography{DNN4TwoPhase,FSI,DLS}
\end{document}